\newtheorem{thm}{Theorem}[section]
\newtheorem{lem}{Lemma}[section]
\newtheorem{cor}{Corollary}[section]
\theoremstyle{definition}
\begin{document}
\title{A relation between the multiplicity of nonzero eigenvalues and the induced matching number of graph \footnote{This work is supported
by National Natural Science Foundation of China (No. 11371372).}}
\author{{Qian-Qian Chen, \,\, Ji-Ming Guo}\footnote{Corresponding author.}\setcounter{footnote}{-1}\footnote{\emph{Email address:}qianqian\_chen@yeah.net(Q,-Q. Chen), jimingguo@hotmail.com(J.-M. Guo).}\\[2mm]
\small School of Mathematics, \\
\small  East China University of Science and Technology, Shanghai, P. R. China.}

\date{}
\maketitle

{\flushleft\large\bf Abstract}
Let $G$ be a graph with an adjacency matrix $A(G)$. The multiplicity of the eigenvalue $\lambda$ of $A(G)$ is denoted by $m_\lambda(G)$. In \cite{Wong}[D. Wong, J. Wang, J. H. Yin, A relation between multiplicity of nonzero eigenvalues of trees and their matching numbers, Linear Algebra Appl. 660 (2023) 80--88.], the author apply the Pater-Wiener Theorem to prove that  $m_\lambda(T)\leq \beta'(T)$ for any $\lambda\neq0$. Moreover, they characterized all trees with $m_\lambda(T)=\beta'(T)$ and $m_\lambda(T)=\beta'(T)-1~(\beta'(T)\geq2)$, where $\beta'(T)$ denotes the induced matching number of $T$.

In this paper, we extend this result from trees to any connected graph. For any non-zero eigenvalue $\lambda$ of the connected graph $G$, we prove that $m_\lambda(G)\leq \beta'(G)+c(G)$, where $c(G)$ represents the cyclomatic number of $G$. The equality holds if and only if $G\cong C_3(a,a,a)$ or $G\cong C_5$, or a tree with the diameter is at most $3$. Furthermore, if $\beta'(G)\geq3$, we characterize all connected graphs with $m_\lambda(G)=\beta'(G)+c(G)-1$.
\vspace{0.1cm}
\begin{flushleft}

\textbf{Keywords:} Graph; eigenvalue multiplicity; induced matching number;matching number.
\end{flushleft}
\textbf{AMS Classification:} 05C50

\section{Introduction}\label{s-1}
All graphs presented in this paper are simple undirected graphs. Let $G=(V(G), E(G))$ be a graph with vertex set $V(G)$ and an edge set $E(G)$. The \emph{adjacency matrix} of $G$ is defined as the matrix $A(G)=(a_{uv})$, where $a_{uv}=1$ if $u,~v$ are adjacent in $G$, and $0$ otherwise. The eigenvalues of $A(G)$ are referred to as the eigenvalues of $G$. We denote the multiplicity of a real number $\lambda$ as an eigenvalue of $G$ by $m_\lambda(G)$, and the set of all eigenvalues in $A(G)$ by $\sigma(G)$.

A graph $H$ is a \emph{ subgraph } of $G$ if $V(H)\subseteq V(G)$ and $E(H)\subseteq E(G)$. Moreover, $H$ is called an \emph{induced subgraph} of $G$ if two vertices of $V(H)$ are adjacent in $H$ if and only if they are adjacent in $G$. For a subset $U$ of $V(G)$,  the graph obtained by removing the vertices in $U$ along with all the edges incident to them from $G$  is denoted as $G-U$. Similarly, if $H$ is a subgraph of $G$ and $U\in V(G)\setminus V(H)$, then the subgraph induced by the vertex set $V(H)\cup U$ in $G$ denoted as $G+U$. We use $C_n(\text{respectively}, K_{1, n-1})$ to represent a cycle (\text{respectively}, a star) with $n$ vertices. The \emph{cyclomatic number} of a graph $G$, is defined as $c(G)=|E(G)|-|V(G)|+\omega(G)$, where $\omega(G)$ indicates the number of connected components in $G$. If $G$ is connected, then $G$ is a tree if $c(G)=0$, or is a unciyclic graph if $c(G)=1$.

Given a graph $G$, a \emph{matching} $M$ in $G$ is a set of pairwise non-adjacent edges, i.e., no two edges have a common vertex. A \emph{maximum matching} is a matching  that contains the largest number of edges possible. The size of a maximum matching for a graph $G$ is known as the \emph{matching number} of $G$ and is represented by $\beta(G)$. If no edge in $G$ connects endpoints of distinct edges in $M$, we say that $M$ is an \emph{induced matching} of $G$, denoted by $M^*$.  If a vertex $v$  in $G$ is matched by an induced matching $M^*$, we say that $v$ is \emph{$M^*$-saturated}, otherwise, $v$ is \emph{$M^*$-unsaturated}. We denote the number of edges in a maximum induced matching of $G$ by $\beta'(G)$. It is clear that for any graph $G$, $\beta'(G)\leq\beta(G)$. For example, $\beta(P_n)=\lfloor\frac{n}{2}\rfloor$ and $\beta'(P_n)=\lfloor\frac{n+1}{3}\rfloor$ for $n\geq2$.

The matching number is a significant structural parameter in graph theory, and it is closely linked to the eigenvalue multiplicities of a graph.  If $\lambda=0$, Cvetkovi\'{c} et al. \cite{CV} showed that $m_0(T)=|V(T)|-2\beta(T)$ holds for any tree $T$. In 2014, Wang et al. \cite{Wang2} provided both an upper and a lower bound for $\lambda=0$ in any graph $G$ based on the matching number and cyclomatic number, which are $n(G)-2 m(G)-c(G)\leq m_0(G)\leq n(G)-2 m(G)+2 c(G)$.
Furthermore, the characterization of graphs  with $m_0(G)=n(G)-2 m(G)-c(G)$ was presented in \cite{Wang1}, while the characterization of graphs with $m_0(G)=n(G)-2 m(G)+2 c(G)$ was given in \cite{Song}. Recently, Ma et al. \cite{Ma} improved the lower bound  and established the inequality $m_0(G)\geq n(G)-2 m(G)-\theta(G)$, where $\theta(G)$ is the least number of edges deleted from $G$ to make $G$ to be a bipartite connected graph.  Building upon these findings, Zhou et al.\cite{Zhou} further refined the bounds by considering the matching number, the maximum number of disjoint odd cycles in $G$ , as well as the number of even cycles present in $G$.  In \cite{Row}, Rowlinson showed that, if a tree $T$ is not $P_2$ or $Y_6$ and it has $1$ as an eigenvalue of multiplicity $k$, then it has $k+1$ pendant edges that form an induced matching, here $Y_6$ is the unique tree of order $6$ with two adjacent vertices of degree $3$. Recently, Wong et al. \cite{Wong} extended Rowlinson's result by replacing eigenvalue $1$ with an arbitrary nonzero eigenvalue. Additionally, they also describe all the trees attaining the upper bound, as follows:
\begin{thm}\label{upper-bound}\cite{Wong}
Let $T$ be a tree with $\lambda \neq 0$ as an eigenvalue of multiplicity $k \geq 1$. If the diameter of $T$ is at least $4$, then $T$ has $k+1$ pendant edges forming an induced matching of $T$. Particularly, $m_\lambda(T) \leq \beta^{\prime}(T)$, and $m_\lambda(T) \leq \beta^{\prime}(T)-1$ if the diameter of $T$ is at least $4$.
\end{thm}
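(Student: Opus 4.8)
The plan is to reduce the whole theorem to its first assertion — that a tree $T$ with $\operatorname{diam}(T)\ge 4$ and $m_\lambda(T)=k\ge 1$ (for $\lambda\ne 0$) has $k+1$ pendant edges forming an induced matching — and to prove that by induction on $|V(T)|$ via the Parter--Wiener theorem. Granting it, $m_\lambda(T)\le\beta'(T)-1$ for $\operatorname{diam}(T)\ge 4$ is immediate (for $k=0$ one only needs $\beta'(T)\ge 1$, which holds since $T$ has an edge), while $m_\lambda(T)\le\beta'(T)$ in general reduces, after the trivial cases $k\le 1$, to the fact that $m_\lambda(T)\ge 2$ forces $\operatorname{diam}(T)\ge 4$: a tree of diameter at most $3$ is $K_1$, $K_2$, a star, or a double star $S_{a,b}$, and for $S_{a,b}$ the quotient matrix of the equitable partition into the two centres and the two sets of leaves has characteristic polynomial $x^4-(a+b+1)x^2+ab$, whose discriminant $(a-b)^2+2(a+b)+1$ as a quadratic in $x^2$ is positive, so its nonzero eigenvalues are the four distinct roots of this quartic, each simple.

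For the induction, the case $k=1$ needs only the diameter hypothesis: if $v_0v_1\cdots v_d$ is a longest path ($d\ge 4$) then $v_0,v_d$ are leaves, so $v_0v_1$ and $v_{d-1}v_d$ are pendant, and they form an induced matching since an edge joining $\{v_0,v_1\}$ to $\{v_{d-1},v_d\}$ would close a cycle with the subpath between them. For $k\ge 2$, apply the Parter--Wiener theorem to get a vertex $v$ with $m_\lambda(T-v)=k+1$, and let $T_1,\dots,T_s$ be the components of $T-v$ with $m_\lambda(T_j)\ge 1$, so $\sum_j m_\lambda(T_j)=k+1$. Each $T_j$ is an induced subgraph of $T$ on fewer vertices; I would exhibit inside each $T_j$ an induced matching of at least $m_\lambda(T_j)$ edges that are pendant \emph{in $T$}, and take the union over $j$. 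That union is an induced matching of $T$ (distinct $T_j$ are non-adjacent in $T$ and none of the chosen edges meets $v$) consisting of $\sum_j m_\lambda(T_j)=k+1$ pendant edges of $T$, as desired.

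The delicate point, and the step I expect to be the real obstacle, is the extraction inside a branch: a pendant edge of $T_j$ need not be pendant in $T$, and this fails for exactly one candidate — the pendant edge of $T_j$ at the vertex $w_j$ adjacent to $v$, and only when $w_j$ is a leaf of $T_j$ whose $T_j$-neighbour has degree $\ge 2$. If $m_\lambda(T_j)=1$ this is harmless: $T_j$ has a pendant edge genuinely pendant in $T$ (either $T_j=K_2$, whose single edge is pendant in $T$, or $T_j$ has at least two pendant edges of which at most one is ``bad''), and a single edge is an induced matching. If $m_\lambda(T_j)\ge 2$, then $\operatorname{diam}(T_j)\ge 4$ by the observation above, so the induction hypothesis supplies $m_\lambda(T_j)+1$ pendant edges of $T_j$ forming an induced matching; deleting the at-most-one bad edge leaves $m_\lambda(T_j)$ of them, still an induced matching and now genuinely pendant in $T$. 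This is precisely why one claims ``$k+1$'' and not ``$k$'': the extra edge gained in each branch is exactly the slack absorbed by the possibly-bad pendant edge at each attachment point. Beyond this bookkeeping, the only external input is the Parter--Wiener theorem (existence of a Parter vertex for a multiplicity-$\ge 2$ eigenvalue of a tree) together with the elementary monotonicity of induced matchings under passing to induced subgraphs.
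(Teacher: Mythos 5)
This theorem is quoted from Wong, Wang and Yin \cite{Wong} and the present paper gives no proof of it; the original argument is, as the abstract here indicates, also driven by the Parter--Wiener theorem, so your reconstruction follows essentially the intended route. Your proof is correct: the reduction to the diameter-$\ge 4$ case via simplicity of nonzero eigenvalues of stars and double stars, the Parter vertex $v$ with $m_\lambda(T-v)=k+1$, and the careful accounting of the at most one pendant edge per branch that loses pendancy at the attachment vertex $w_j$ (the slack absorbed by the ``$+1$'' in each branch of multiplicity at least $2$, and by the second leaf of any branch of multiplicity $1$) all check out; the only cosmetic slip is that when $ab=0$ the quartic $x^4-(a+b+1)x^2+ab$ has only two nonzero roots rather than four, which does not affect the conclusion that they are simple.
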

Let $diam(G)$ denote the diameter of $G$. A \emph{caterpillar graph} is a tree which on removal of all its pendant vertices leaves a path. In such a graph, the path is called the \emph{backbone}, and the pendant edges are called \emph{hairs}. In \cite{Wong}, Wong et al. also provide a characterize of all trees with a nonzero eigenvalue $\lambda$ having a multiplicity of $\beta^{\prime}(T)-1$, which is stated below:
\begin{thm}\label{extremal-tree}\cite{Wong}
Let $T$ be a tree with $\lambda$ as a nonzero eigenvalue. Then $m_\lambda(T)=\beta^{\prime}(T)-1$ if and only if $T$ is one of the following graphs:
\begin{enumerate}[(a)]
\vspace{-0.2cm}
\item $T$ is a caterpillar graph with diameter $4$, $5$ or $6$ , and the center vertex of the backbone of $T$ has degree $2$ if the diameter is $6$.
\vspace{-0.2cm}
\item There is a vertex $w$ such that $T-w=H_1 \cup \ldots \cup H_{s+1} \cup I$, where $s+1 \geq 3$, $I$ induces a null graph (without edges) and each component $H_i$ of $T-w$ has $\lambda$ as an eigenvalue and $\operatorname{diam}\left(H_i\right) \leq 3$. Moreover, either $\operatorname{diam}\left(H_i+w\right)=2$ for some $i$, or $I=\emptyset$ and $\operatorname{diam}\left(H_i+w\right)=3$ for all $i$.
\vspace{-0.2cm}
\end{enumerate}
\end{thm}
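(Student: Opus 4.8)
The plan is to prove Theorem~\ref{extremal-tree} by producing a Parter vertex of $T$ for the eigenvalue $\lambda$, dissecting $T$ along the branches at that vertex, and reading off conditions (a) and (b) from a tight analysis of how the induced matching number behaves under this dissection. Throughout I would use Theorem~\ref{upper-bound} and its companion from \cite{Wong}: a tree $S$ with a nonzero eigenvalue $\lambda$ satisfies $m_\lambda(S)=\beta'(S)$ if and only if $\operatorname{diam}(S)\le 3$, so once $\operatorname{diam}(S)\ge 4$ one has $m_\lambda(S)\le\beta'(S)-1$. First one disposes of small diameters: a tree with an induced matching of size $2$ contains two edges at distance $\ge 2$, hence a path on $5$ vertices, so $\operatorname{diam}(T)\le 3$ would force $\beta'(T)\le 1$ and then $m_\lambda(T)=\beta'(T)-1\le 0$, impossible since $\lambda\in\sigma(T)$. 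Hence $\operatorname{diam}(T)\ge 4$, Theorem~\ref{upper-bound} gives $m_\lambda(T)\le\beta'(T)-1$, and only the equality case remains.

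For the necessity, set $s:=m_\lambda(T)=\beta'(T)-1\ge 1$ and apply the Parter--Wiener theorem to obtain a vertex $w$ with $m_\lambda(T-w)=s+1$ and $\lambda$ an eigenvalue of at least two components of $T-w$. Writing $T-w=C_1\cup\cdots\cup C_p$, the engine of the proof is the chain
\[
s+1=\sum_j m_\lambda(C_j)\le\sum_j\beta'(C_j)\le\beta'(T)=s+1,
\]
where the first inequality is Theorem~\ref{upper-bound} applied componentwise (with $m_\lambda(C_j)=0$ when $\lambda\notin\sigma(C_j)$) and the second holds because the union of induced matchings of the vertex-disjoint $C_j$, with $w$ left unsaturated, is an induced matching of $T$. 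Equality throughout forces: every component carrying $\lambda$ satisfies $m_\lambda(C_j)=\beta'(C_j)$, hence has diameter at most $3$; every component not carrying $\lambda$ has $\beta'(C_j)=0$, hence is a single vertex; and $\beta'(T)=\sum_j\beta'(C_j)$, i.e.\ restoring $w$ does not raise the induced matching number. Relabel the $\lambda$-components $H_1,\dots,H_{s+1}$ (so $s+1\ge 2$) and the isolated ones as $I$, recovering the skeleton of (b); moreover each $H_i$ is a tree of diameter at most $3$ carrying a nonzero eigenvalue, so $\beta'(H_i)=1$ and $\beta'(T)=s+1$, and $H_i+w$ is again a tree, $w$ having the unique neighbour $u_i$ in $H_i$.

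The remaining work is purely combinatorial: identify the attachments of $w$ for which $\beta'(T)=s+1$. Assume first $\operatorname{diam}(H_i+w)\le 3$ for every $i$. Splitting an induced matching of $T$ into the at most one edge incident to $w$ and the rest, the latter contributing at most $\beta'(H_i)=1$ to each $H_i$, one checks that an induced matching of size $s+2$ can be built exactly when $w$ has a neighbour in $I$ while every $H_j$ contains an edge avoiding $u_j$ --- and $H_j$ has such an edge precisely when $\operatorname{diam}(H_j+w)=3$ (when $\operatorname{diam}(H_j+w)=2$, $u_j$ meets every edge of $H_j$), a same-component extension being blocked automatically since $\operatorname{diam}(H_i+w)\le 3$ makes $H_i-N_{H_i}[u_i]$ edge-free in a tree. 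Hence $\beta'(T)=s+1$ if and only if $\operatorname{diam}(H_i+w)=2$ for some $i$, or $I=\emptyset$ and $\operatorname{diam}(H_j+w)=3$ for all $j$, which is exactly condition (b). Now split on $s+1$: if $s+1\ge 3$ we are in case (b); if $s+1=2$ then $T$ is the join of two trees of diameter at most $3$ through $w$ (plus possible pendants at $w$) and, unwinding the diameter constraints, $T$ is precisely a caterpillar of diameter $4$, $5$, or $6$, with $w$ forced to be the backbone center of degree $2$ in the diameter-$6$ case, which is case (a). For the sufficiency one checks directly that every tree listed in (a) or (b) admits such a vertex $w$, exhibits the induced matching of size $\beta'(T)$, and invokes Theorem~\ref{upper-bound} for the reverse bound $m_\lambda(T)\le\beta'(T)-1$.

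I expect the main obstacle to be the reconciliation of the two places where genuine casework enters. The Parter vertex handed over by Parter--Wiener need not be unique, and a branch $H_i+w$ of diameter $4$ is a priori possible (since $\operatorname{diam}(H_i)\le 3$ and $w$ is appended to $H_i$ as a pendant), in which case the template in (b) does not literally apply; one must show that then $H_i$ is a double star with $w$ at one of its leaves and the other branches are stars, so that $T$ collapses into the caterpillar family of diameter $6$ covered by (a) --- equivalently, that for every tree outside the stated list the structure fails for every admissible choice of $w$. The second, more routine, point is the $s+1=2$ reduction itself: turning the requirement that $\beta'(T)=2$ while the two branches obey the diameter bounds into precisely the caterpillars of diameter $4,5,6$ (with the degree-$2$ center at diameter $6$) calls for a careful case analysis on whether the branches are stars or double stars and on how $w$ and the vertices of $I$ are attached.
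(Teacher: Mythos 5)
This statement is Theorem~\ref{extremal-tree}, which the paper does not prove: it is quoted verbatim from \cite{Wong}, so there is no in-paper argument to compare against. Judged on its own terms, your skeleton for the case $m_\lambda(T)=s\geq 2$ (strong Parter vertex $w$, the telescoping chain $s+1=\sum_j m_\lambda(C_j)\leq\sum_j\beta'(C_j)\leq\beta'(T)=s+1$, and equality forcing each $\lambda$-component to have diameter at most $3$ and each non-$\lambda$-component to be an isolated vertex) is sound and is surely the engine of the original proof. But there are two genuine gaps. First, the Parter--Wiener theorem guarantees a strong Parter vertex only when $m_\lambda(T)\geq 2$ (or when $\lambda$ survives in some vertex-deleted subtree); for $s=1$ this can fail outright --- take $\lambda$ to be the spectral radius of $P_5$, where $m_\lambda(T-v)=0$ for every $v$, yet $m_\lambda(T)=1=\beta'(T)-1$ and $T$ belongs to family (a). So your entire route to case (a) via "the join of two branches through $w$" does not get started for such $\lambda$. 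The repair is to notice that (a) is a purely combinatorial statement: a tree has $\beta'(T)=2$ if and only if it is a caterpillar of diameter $4$, $5$ or $6$ whose backbone center has degree $2$ when the diameter is $6$ (any non-caterpillar contains the induced spider with three legs of length $2$, which already has $\beta'=3$, and diameter $\geq 7$ forces $\beta'\geq\beta'(P_8)=3$); combined with Lemma~\ref{caterpillar} (nonzero eigenvalues of caterpillars are simple), case (a) needs no Parter vertex at all.

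Second, your proposed disposal of the branches with $\operatorname{diam}(H_i+w)=4$ is wrong. You claim that such a branch forces $T$ into the caterpillar family (a); it does not. Take $w$ adjacent to a leaf $u_1$ of $H_1=Y_6$ (so $\operatorname{diam}(H_1+w)=4$) and to the centers of $H_2=H_3=K_{1,4}$, with $\lambda=2$: then $m_\lambda(T)=3-1=2=\beta'(T)-1$, $T$ is not a caterpillar ($w$ has degree $3$ after deleting pendant vertices), and $T$ lies squarely in family (b) via the clause "$\operatorname{diam}(H_i+w)=2$ for some $i$". Note that condition (b) only requires $\operatorname{diam}(H_i)\leq 3$, not $\operatorname{diam}(H_i+w)\leq 3$, so a diameter-$4$ branch is permitted; what it contributes is an induced matching of size $2$ inside $H_i+w$ (namely $wu_i$ together with an edge of $H_i$ at distance $2$ from $u_i$), and the requirement $\beta'(T)=s+1$ then forces some \emph{other} branch $H_j$ to be a star centered at $u_j$, i.e.\ $\operatorname{diam}(H_j+w)=2$. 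So the diameter-$4$ branches must be absorbed into the case analysis leading to (b)'s ``either\ldots or'' clause, not shunted to (a); as written, your argument both misses this subcase and predicts the wrong outcome for it.
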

In this paper, we intend to extend this result from trees to any connected graph. we prove that for any non-zero eigenvalue $\lambda$ of the connected graph $G$, $m_\lambda(G)\leq \beta'(G)+c(G)$, where $c(G)$ is the cyclomatic number of $G$. Equality holds if and only if $G\cong C_3(a,a,a)$ or $G\cong C_5$. Furthermore, if $\beta'(G)\geq3$, we characterize all connected graphs with $m_\lambda(G)=\beta'(G)+c(G)-1$.
\section{Preliminaries}\label{s-2}
Let $A=\left(a_{i j}\right)_{n \times n}$ be the adjacency matrix of $G$. For any eigenvalue $\lambda$ of $A$, the eigenspace of $\lambda$ is defined as $\mathcal{E}(\lambda)=\left\{x \in \mathbb{R}^n: A x=\lambda x\right\}$. Let $\left\{\mathbf{e}_1, \mathbf{e}_2, \ldots, \mathbf{e}_n\right\}$ denote the standard orthonormal basis, and let $E_\lambda$ denote the matrix which represents the orthogonal projection of $\mathbb{R}^n$ onto the eigenspace $\mathcal{E}(\lambda)$ of $A$ with respect to $\left\{\mathbf{e}_1, \mathbf{e}_2, \ldots, \mathbf{e}_n\right\}$. There always exists $X \subseteq\{1, \ldots, n\}$ such that vectors $E_\lambda \mathbf{e}_i(i \in X)$ form a basis for $\mathcal{E}(\lambda)$, such a set $X$ is called a star set for eigenvalue $\lambda$ of $A$ (see \cite{Cv1}). Clearly $|X|=\operatorname{dim} \mathcal{E}(\lambda)$ is the multiplicity of $\lambda$.  Since $E_\lambda$ is a polynomial function of $A$, we have
\begin{equation}\label{eigenvalue-equ}
\lambda E_\lambda \mathbf{e}_i=\sum_{j \sim i}  E_\lambda \mathbf{e}_j,
\end{equation}
where $j \sim i$ means that $i$ and $j$ are adjacent in graph $G$.
\begin{lem}\label{interlacing}\cite{Cv2}
Let $G$ be a graph with a vertex $v$. Then $m_\lambda(G-v)-1\leq m_\lambda(G)\leq m_\lambda(G-v)+1$.
\end{lem}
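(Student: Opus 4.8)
The plan is to derive the two inequalities $m_\lambda(G)\le m_\lambda(G-v)+1$ and $m_\lambda(G-v)\le m_\lambda(G)+1$ separately. The fastest route is to invoke the Cauchy interlacing theorem: $A(G-v)$ is the principal submatrix of the real symmetric matrix $A(G)$ obtained by deleting the row and column indexed by $v$, so if $\lambda_1\ge\cdots\ge\lambda_n$ and $\mu_1\ge\cdots\ge\mu_{n-1}$ are the spectra of $A(G)$ and $A(G-v)$ respectively, then $\lambda_{i+1}\le\mu_i\le\lambda_i$ for all $i$. A block of $k$ consecutive equal entries $\lambda$ among the $\lambda_i$'s then forces at least $k-1$ of the $\mu_i$'s to equal $\lambda$, and symmetrically a block of $\ell$ equal entries $\lambda$ among the $\mu_i$'s forces at least $\ell-1$ of the $\lambda_i$'s to equal $\lambda$; this is exactly $|m_\lambda(G)-m_\lambda(G-v)|\le 1$. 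Below I also sketch a self-contained argument via eigenspaces, in case one prefers not to quote interlacing.

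For $m_\lambda(G-v)\ge m_\lambda(G)-1$, consider $U=\{x\in\mathcal{E}_G(\lambda):x_v=0\}$, the kernel of the coordinate functional $x\mapsto x_v$ on the $\lambda$-eigenspace $\mathcal{E}_G(\lambda)$ of $A(G)$, so $\dim U\ge m_\lambda(G)-1$. For $x\in U$ let $x'$ be the restriction of $x$ to $V(G)\setminus\{v\}$; since $x_v=0$, for every $u\ne v$ one has $(A(G-v)x')_u=\sum_{w\sim u,\,w\ne v}x_w=(A(G)x)_u=\lambda x_u$, so $x'\in\mathcal{E}_{G-v}(\lambda)$, and $x\mapsto x'$ is injective on $U$. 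Hence $m_\lambda(G-v)\ge\dim U\ge m_\lambda(G)-1$.

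For $m_\lambda(G)\ge m_\lambda(G-v)-1$, let $W=\mathcal{E}_{G-v}(\lambda)$ and let $W_0$ be the kernel of the functional $\phi(y')=\sum_{u\sim v}y'_u$ on $W$, so $\dim W_0\ge m_\lambda(G-v)-1$. Extend each $y'\in W_0$ to $y\in\mathbb{R}^{V(G)}$ by $y_v=0$; then for $u\ne v$ the identity $(A(G)y)_u=(A(G-v)y')_u=\lambda y'_u=\lambda y_u$ holds, while at $v$ we get $(A(G)y)_v=\sum_{u\sim v}y_u=\phi(y')=0=\lambda y_v$, so $y\in\mathcal{E}_G(\lambda)$ and $y'\mapsto y$ is injective. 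Thus $m_\lambda(G)\ge\dim W_0\ge m_\lambda(G-v)-1$, and combining the two bounds completes the proof. I do not expect any genuine difficulty here; the only place requiring attention is the bookkeeping at the vertex $v$ in the last step, where the defining condition $\phi(y')=0$ of $W_0$ is precisely what is needed to make the $v$-th eigen-equation valid.
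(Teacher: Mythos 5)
Your proof is correct. The paper states this lemma as a citation to the textbook of Cvetkovi\'{c}, Rowlinson and Simi\'{c} and gives no proof of its own; the standard proof there is exactly your first route via Cauchy interlacing (a block of $k$ equal eigenvalues of $A(G)$ traps $k-1$ eigenvalues of the principal submatrix $A(G-v)$, and conversely), and your self-contained eigenspace argument --- restricting the codimension-$\le 1$ subspace $\{x: x_v=0\}$ of $\mathcal{E}_G(\lambda)$ in one direction, and extending the kernel of $y'\mapsto\sum_{u\sim v}y'_u$ by zero at $v$ in the other --- is also valid.
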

We will use the following lemma in our proof of the main results. Its proof can be seen in \cite{CGW}, but we give the proof for completeness.
\begin{lem}\label{GuvH}
Let $GuvH$ be the graph obtained from $G \cup H$ by adding an edge joining the vertex $u$ of $G$ to the vertex $v$ of $H$. For any eigenvalue $\lambda$ of $A(GuvH)$, if  $m_\lambda (G)=m_\lambda (G-u)+1$, then $m_\lambda(GuvH)=m_\lambda(H-v)+m_\lambda (G)-1$.
\end{lem}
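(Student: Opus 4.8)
The plan is to work directly with eigenvectors of $\Gamma:=GuvH$, and to use the hypothesis $m_\lambda(G)=m_\lambda(G-u)+1$ in the equivalent form that there is a $\lambda$-eigenvector of $G$ which does not vanish at $u$ (a \emph{downer vertex} condition). Writing a vector on $V(\Gamma)=V(G)\cup V(H)$ as a pair $(x,y)$, and using that the only edge of $\Gamma$ outside $G\cup H$ is $uv$, one sees that $(x,y)\in\mathcal{E}_\Gamma(\lambda)$ if and only if $(A(G)-\lambda I)x=-y_v\,\mathbf{e}_u$ and $(A(H)-\lambda I)y=-x_u\,\mathbf{e}_v$, where $\mathbf{e}_u$ and $\mathbf{e}_v$ denote the coordinate vectors of $u$ in $G$ and of $v$ in $H$.

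First I would translate the hypothesis into a statement about $\mathbf{e}_u$: if every $\lambda$-eigenvector of $G$ vanished at $u$, then restriction to $G-u$ would embed $\mathcal{E}_G(\lambda)$ injectively into $\mathcal{E}_{G-u}(\lambda)$, forcing $m_\lambda(G)\le m_\lambda(G-u)$, contrary to assumption; hence $\mathbf{e}_u\notin\mathcal{E}_G(\lambda)^{\perp}$. Since $A(G)-\lambda I$ is symmetric, its range is exactly $\mathcal{E}_G(\lambda)^{\perp}$, so the equation $(A(G)-\lambda I)x=-y_v\,\mathbf{e}_u$ is solvable in $x$ only when $y_v=0$. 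Consequently \emph{every} eigenvector of $\Gamma$ for $\lambda$ has $y_v=0$; substituting this back gives $x\in\mathcal{E}_G(\lambda)$, and, splitting off the coordinate $v$ in the second equation (with $c$ the neighbour-indicator of $v$ in $H$), the restriction $y'$ of $y$ to $V(H)\setminus\{v\}$ satisfies $(A(H-v)-\lambda I)y'=0$ and $c^{\top}y'=-x_u$.

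Conversely, any pair $(x,y')$ with $x\in\mathcal{E}_G(\lambda)$, $y'\in\mathcal{E}_{H-v}(\lambda)$ and $x_u+c^{\top}y'=0$ yields a genuine $\lambda$-eigenvector $(x,(y',0))$ of $\Gamma$, as one checks coordinate by coordinate (routine). Thus $\mathcal{E}_\Gamma(\lambda)$ can be identified with the kernel of the linear functional $(x,y')\mapsto x_u+c^{\top}y'$ on $\mathcal{E}_G(\lambda)\times\mathcal{E}_{H-v}(\lambda)$. This functional is not identically zero, since taking $y'=0$ and $x$ a $\lambda$-eigenvector of $G$ with $x_u\ne 0$ (which exists by the previous step) gives a nonzero value; hence its kernel has dimension $\dim\mathcal{E}_G(\lambda)+\dim\mathcal{E}_{H-v}(\lambda)-1=m_\lambda(G)+m_\lambda(H-v)-1$, which is the asserted identity.

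The crux is the middle step: recognising that the hypothesis is precisely what makes $\mathbf{e}_u$ non-orthogonal to $\mathcal{E}_G(\lambda)$, and hence pins down $y_v=0$ for every $\lambda$-eigenvector of $\Gamma$; once that rigidity is in hand, the statement reduces to a one-line dimension count. The only remaining care is the (elementary) converse verification and the observation that the constraint functional is nontrivial.
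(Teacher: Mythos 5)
Your argument is correct and complete, but it takes a genuinely different route from the paper's. The paper works at the level of the block matrix $A(GuvH)-\lambda I$: from $m_\lambda(G-u)=m_\lambda(G)-1$ it deduces that the row indexed by $u$ lies in the row space of the rows indexed by $V(G)\setminus\{u\}$, eliminates that row and column, and then applies a congruence transformation to obtain $\operatorname{rank}(A(GuvH)-\lambda I)=\operatorname{rank}(A(G-u)-\lambda I)+2+\operatorname{rank}(A(H-v)-\lambda I)$, from which the multiplicity identity follows by counting dimensions. You instead parametrize $\mathcal{E}_{GuvH}(\lambda)$ directly: the observation that the hypothesis is equivalent to $\mathbf{e}_u\notin\mathcal{E}_G(\lambda)^{\perp}=\operatorname{range}(A(G)-\lambda I)$ forces $y_v=0$ for every $\lambda$-eigenvector of $GuvH$, after which the eigenspace is exactly the kernel of the functional $(x,y')\mapsto x_u+c^{\top}y'$ on $\mathcal{E}_G(\lambda)\times\mathcal{E}_{H-v}(\lambda)$, and the same non-orthogonality shows this functional is nonzero, giving the dimension count. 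All the steps check out (the restriction argument for $m_\lambda(G)\le m_\lambda(G-u)$, the solvability criterion via symmetry of $A(G)-\lambda I$, and the coordinate-by-coordinate verification of the converse are each routine and valid). What your approach buys is conceptual transparency: it isolates the hypothesis as a ``downer vertex'' condition and explains why it is exactly what is needed, both to pin down $y_v=0$ and to make the constraint functional nontrivial; the paper's rank computation is more mechanical but has the minor advantage of never mentioning eigenvectors and of transferring verbatim to Hermitian matrices with the same zero pattern, which is the setting of the reference it cites.
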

\begin{proof}
Let $A(GuvH)$ defined as follows:
\begin{equation*}
 A(GuvH)=\left(\begin{array}{cccc}
 A(G-u) & \alpha & O & O\\
\alpha^T &0 & 1&O \\
  O & 1 &0&\beta^T\\
  O&O&\beta&A(H-v)\\
 \end{array}\right).
 \end{equation*}
Then
 \begin{equation*}
 A(GuvH)-\lambda I=\left(\begin{array}{cccc}
 A_1 & \alpha & O & O\\
\alpha^T &-\lambda & 1&O \\
  O & 1 &-\lambda&\beta^T\\
  O&O&\beta&A_2\\
 \end{array}\right),
 \end{equation*}
where $A_1=A(G-u)-\lambda I$ and $A_2=A(H-v)-\lambda I$. Since $m_\lambda (G-u)=m_\lambda (G)-1$, it is easy to see $(\alpha^T, -\lambda)$ can be represented linearly by the row vectors of $[A_1 , \alpha]$. Therefore
\begin{equation*}
 A(GuvH)-\lambda I=\left(\begin{array}{cccc}
 A_1 & \alpha & O & O\\
\alpha^T &-\lambda & 1&O \\
  O & 1 &-\lambda&\beta^T\\
  O&O&\beta&A_2\\
 \end{array}\right)\rightarrow
 \left(\begin{array}{cccc}
 A_1 & O & O & O\\
O&0 & 1&O \\
  O & 1 &-\lambda&\beta^T\\
  O&O&\beta&A_2\\
 \end{array}\right)=A'.
 \end{equation*}
 Let
 \begin{equation*}
Q=\left(\begin{array}{cccc}
I & O & O&O\\
O &1& 1&-\beta^T\\
  O & 0 & 1&0\\
  O&O&O&I\\
 \end{array}\right),
 \end{equation*}
 we have
 \begin{equation*}
 Q^TA'Q=\left(\begin{array}{cccc}
A_1 & O & O & O\\
O &0& 1&O \\
  O & 1 & 2-\lambda&O\\
  O&O&O&A_2\\
 \end{array}\right),
 \end{equation*}
 which implies that $rank(A')=rank(A_1)+2+rank(A_2)$. Note that $rank(A(GuvH)-\lambda I)=rank(A')$,  we have
 \begin{equation*}
 \begin{array}{lll}
 m_\lambda(GuvH)&=&m_\lambda(A_1)+m_\lambda(A_2)\\
 &=&m_\lambda(G-u)+m_\lambda(H-v)\\
 &=&m_\lambda (G)-1+m_\lambda(H-v),
 \end{array}
 \end{equation*}
as required.
\end{proof}

\begin{lem}\cite{Wong}\label{star}
Let $T$ be a tree with diameter $d\leq3$. If $\lambda\neq0$ is an eigenvalue of $T$, then $\lambda$ is not an eigenvalue of $T-v$ for any vertex $v$ of $T$.
\end{lem}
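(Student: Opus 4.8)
The plan is to reduce the statement to an elementary computation after pinning down the possible shapes of a tree of diameter at most $3$. First I would note that such a tree $T$ has at most two non-pendant vertices: if $\operatorname{diam}(T)=3$, realized by a path $a,b,c,d$, then every other vertex must be adjacent to $b$ or to $c$ (otherwise the path could be lengthened), and $a,d$ are pendants, so only $b,c$ can fail to be pendants; if $\operatorname{diam}(T)\le 2$ there is at most one non-pendant vertex. Consequently $T$ is $K_1$, a star $K_{1,m}$ with $m\ge 1$, or a double star $S(p,q)$ formed by two adjacent vertices $u,v$ carrying $p\ge 1$ and $q\ge 1$ pendants respectively, the case $\operatorname{diam}(T)=3$ being exactly the double-star case. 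If $T=K_1$ there is nothing to prove, since $K_1$ has no nonzero eigenvalue.

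For $T=K_{1,m}$ with center $c$, I would substitute an eigenvector $x$ of a nonzero eigenvalue $\lambda$ into the eigenvalue equation $\lambda x_i=\sum_{j\sim i}x_j$: each pendant carries value $x_c/\lambda$, and the equation at $c$ becomes $\lambda^2 x_c=m x_c$. Since $x\ne 0$ forces $x_c\ne 0$, every nonzero eigenvalue of $K_{1,m}$ satisfies $\lambda^2=m$. Deleting $c$ leaves an edgeless graph (no nonzero eigenvalue), and deleting a pendant $w$ leaves $K_{1,m-1}$ (or $K_1$ when $m=1$), whose nonzero eigenvalues satisfy $\lambda^2=m-1\ne m$; in either case $\lambda\notin\sigma(T-w)$.

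For $T=S(p,q)$, the same substitution shows that the pendants at $u$ (resp.\ $v$) all carry value $x_u/\lambda$ (resp.\ $x_v/\lambda$), and that $x_u,x_v\ne 0$ (if $x_u=0$ the equation at $u$ gives $x_v=0$, hence $x=0$). Writing $\mu:=\lambda^2>0$, the equations at $u$ and $v$ read $(\mu-p)x_u=\lambda x_v$ and $(\mu-q)x_v=\lambda x_u$, which multiply to $(\mu-p)(\mu-q)=\mu$. Now I would run over $T-w$. If $w=u$, then $T-u=pK_1\cup K_{1,q}$, whose nonzero eigenvalues satisfy $\mu=q$; then the left side of $(\mu-p)(\mu-q)=\mu$ vanishes while the right side equals $q\ge 1$, a contradiction, and $w=v$ is symmetric. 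If $w$ is a pendant at $u$ with $p\ge 2$, then $T-w=S(p-1,q)$, whose nonzero eigenvalues satisfy $(\mu-(p-1))(\mu-q)=\mu$, and subtracting this from $(\mu-p)(\mu-q)=\mu$ forces $\mu=q$, the same contradiction. If $w$ is a pendant at $u$ with $p=1$, then $T-w=K_{1,q+1}$, so $\mu=q+1$, which violates $(\mu-1)(\mu-q)=\mu$ since $q\ne q+1$. The cases where $w$ is a pendant at $v$ are symmetric, which exhausts all vertices of $T$.

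I do not expect a genuine obstacle; the argument is elementary once the structure is fixed. The two points needing a little care are (i) checking that the eigenvector values at the centers are nonzero, which is what licenses passing to the polynomial identity in $\mu$ (equivalently, dividing by $x_u x_v$), and (ii) bookkeeping of the degenerate boundary cases $p=1$, $q=1$, and $K_2=K_{1,1}$, where deleting a pendant turns a double star into a star. Everything else is a one-line comparison of two monic quadratics in $\mu$ differing only in a linear coefficient, which therefore have no common root with $\mu>0$. (The double-star cases could alternatively be routed through Lemma~\ref{GuvH} applied to $S(p,q)=K_{1,p}\,u\,v\,K_{1,q}$, but the direct eigenvector computation looks shortest.)
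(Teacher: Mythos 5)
Your argument is correct and complete. One contextual remark: the paper does not prove Lemma~\ref{star} at all --- it is imported verbatim from \cite{Wong} with a citation and used as a black box --- so there is no in-paper proof to compare against; what you have supplied is a self-contained substitute. Your route (classify diameter-$\le 3$ trees as $K_1$, stars $K_{1,m}$, or double stars $S(p,q)$; derive from the eigenvalue equations the constraints $\lambda^2=m$ for the star and $(\mu-p)(\mu-q)=\mu$ with $\mu=\lambda^2$ for the double star, after checking that the center coordinates of any eigenvector are nonzero; then verify that the analogous constraint for each $T-v$ is incompatible) covers every vertex type, and the boundary cases you flag ($p=1$ turning $S(1,q)-w$ into $K_{1,q+1}$, deletion of a center leaving $pK_1\cup K_{1,q}$, and $K_2=K_{1,1}$) are all handled correctly: in each instance the two quadratic constraints in $\mu$ force $\mu=q$ or $\mu=q+1$ against $(\mu-p)(\mu-q)=\mu$, which is impossible for $q\ge 1$. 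An equivalent and slightly slicker packaging of the same computation is via characteristic polynomials, $\phi(K_{1,m},x)=x^{m-1}(x^2-m)$ and $\phi(S(p,q),x)=x^{p+q-2}\bigl((x^2-p)(x^2-q)-x^2\bigr)$, whose nonzero roots are exactly the $\mu$-constraints you wrote down; but your eigenvector derivation is just as rigorous and needs no additional machinery.
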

\begin{lem}\cite{Wong}\label{m(T)=b(T)}
Let $T$ be a tree with $\lambda$ as a nonzero eigenvalue. Then $m_\lambda(T)=\beta'(T)$ if and only if the diameter of $T$ is $1$, $2$ or $3$.
\end{lem}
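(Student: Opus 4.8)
The plan is to split the biconditional and handle each direction separately; only the ``if'' direction needs a genuine (though very short) argument, since the ``only if'' direction drops out of Theorem~\ref{upper-bound}.

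First I would dispose of the ``only if'' direction. Suppose $m_\lambda(T)=\beta'(T)$ for some nonzero eigenvalue $\lambda$ of $T$. Then $T$ has at least one edge, so $\operatorname{diam}(T)\ge 1$. If $\operatorname{diam}(T)\ge 4$, the last clause of Theorem~\ref{upper-bound} forces $m_\lambda(T)\le\beta'(T)-1<\beta'(T)$, contradicting the assumption. Hence $\operatorname{diam}(T)\in\{1,2,3\}$.

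For the ``if'' direction, assume $\operatorname{diam}(T)\in\{1,2,3\}$ and let $\lambda\neq 0$ be an eigenvalue of $T$. I would first record the elementary fact that $\beta'(T)=1$ here: a tree of diameter $1$ is $K_2$, a tree of diameter $2$ is a star $K_{1,n-1}$ with $n\ge 3$, and a tree of diameter $3$ is a double star (two adjacent centres, each bearing at least one pendant vertex), and in every such tree any two distinct edges either meet or join two adjacent vertices, so a maximum induced matching consists of a single edge. Next, by Lemma~\ref{star} we have $\lambda\notin\sigma(T-v)$, that is $m_\lambda(T-v)=0$, for every $v\in V(T)$. Fixing one such $v$ and invoking the interlacing inequality of Lemma~\ref{interlacing} gives $m_\lambda(T)\le m_\lambda(T-v)+1=1$; since $\lambda\in\sigma(T)$ we also have $m_\lambda(T)\ge 1$, and therefore $m_\lambda(T)=1=\beta'(T)$, as required.

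I do not expect a real obstacle here: once Lemma~\ref{star} and Theorem~\ref{upper-bound} are available, the statement is a direct consequence of eigenvalue interlacing. The only points to watch are the trivial boundary case $\operatorname{diam}(T)=0$ (then $T=K_1$ has no nonzero eigenvalue, so the hypotheses are vacuous) and the routine structural description of trees of diameter at most $3$ that is used to read off $\beta'(T)=1$.
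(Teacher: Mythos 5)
Your argument is correct. Note that the paper does not prove this lemma at all: it is imported verbatim from \cite{Wong} (as are Theorem~\ref{upper-bound} and Lemma~\ref{star}), so there is no in-paper proof to compare against. What you have produced is a valid self-contained derivation from the other quoted results: the ``only if'' direction is immediate from the last clause of Theorem~\ref{upper-bound}, and the ``if'' direction correctly combines the structural observation that every tree of diameter $1$, $2$ or $3$ ($K_2$, a star, or a double star) has $\beta'(T)=1$ with Lemma~\ref{star} and the interlacing bound of Lemma~\ref{interlacing} to get $m_\lambda(T)\le m_\lambda(T-v)+1=1$. Your handling of the degenerate case $\operatorname{diam}(T)=0$ is also the right thing to flag. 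The only caveat is circularity at the level of the source: in \cite{Wong} this lemma and Theorem~\ref{upper-bound} are part of the same development, so if one were re-proving everything from scratch one would have to check that the proof of Theorem~\ref{upper-bound} does not already rely on this lemma; as a derivation within the present paper, where both are taken as given, your argument is fine.
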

\begin{lem}\cite{Wong}\label{caterpillar}
Each nonzero eigenvalue of caterpillar graph is simple.
\end{lem}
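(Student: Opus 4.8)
The plan is to show, directly from the eigenvalue equation $A(G)x=\lambda x$ (equivalently \eqref{eigenvalue-equ}), that a caterpillar has a one-dimensional eigenspace at every nonzero $\lambda$; since an eigenvalue always has multiplicity at least $1$, this yields $m_\lambda(G)=1$. Write the backbone of $G$ as a path $v_1v_2\cdots v_k$, and for $1\le i\le k$ let $d_i\ge 0$ be the number of hairs attached at $v_i$; fix a nonzero $\lambda\in\sigma(G)$ and an eigenvector $x\in\mathcal E(\lambda)$. The strategy is to express every coordinate of $x$ as a fixed scalar multiple of the single coordinate $x_{v_1}$.

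The first step is to eliminate the hairs: for a hair $p$ attached at $v_i$ the equation at $p$ reads $\lambda x_p=x_{v_i}$, so $x_p=x_{v_i}/\lambda$, and each hair-coordinate becomes a fixed multiple of a backbone-coordinate. Substituting this into the equation at an internal backbone vertex $v_i$ $(2\le i\le k-1)$ gives
\[
x_{v_{i+1}}=\Bigl(\lambda-\tfrac{d_i}{\lambda}\Bigr)x_{v_i}-x_{v_{i-1}},
\]
a second-order recurrence whose coefficient of $x_{v_{i+1}}$ is always $1$ (so it remains valid even when $\lambda=d_i/\lambda$), while the equation at the end-vertex $v_1$ reads $x_{v_2}=(\lambda-\tfrac{d_1}{\lambda})x_{v_1}$. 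Thus $x_{v_2}$ is already a fixed multiple of $x_{v_1}$, and then by induction every $x_{v_j}$ — hence, via the first step, every coordinate of $x$ — is a fixed scalar multiple of $x_{v_1}$; the one remaining equation, at $v_k$, imposes nothing new since $x$ is a genuine eigenvector. Consequently $x\mapsto x_{v_1}$ is an injective linear map $\mathcal E(\lambda)\to\mathbb R$ (if $x_{v_1}=0$ then $x_{v_2}=0$, so all backbone and hair coordinates vanish), whence $\dim\mathcal E(\lambda)\le 1$ and $m_\lambda(G)=1$.

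Finally I would dispatch the degenerate caterpillars separately: when the backbone is a single vertex, i.e.\ $G\cong K_{1,n}$, eliminating the hairs leaves only the equation $(\lambda-n/\lambda)x_{v_1}=0$, again a one-dimensional eigenspace for each $\lambda=\pm\sqrt n$, and $P_2,P_3$ are immediate. The only point needing genuine care — and the mild obstacle here — is the bookkeeping at the two ends of the backbone: one must use the equation at $v_1$ to collapse the two initial data of the recurrence to the single parameter $x_{v_1}$, and observe that the equation at $v_k$ cannot contradict the propagated values precisely because $\lambda$ was assumed to be an eigenvalue. Everything else is a routine induction along the backbone.
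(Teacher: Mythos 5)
Your argument is correct. Note that the paper does not prove this lemma at all: it is imported verbatim from \cite{Wong}, so there is no in-paper proof to compare against. What you supply is the standard self-contained argument: for $\lambda\neq 0$ the hair equations $\lambda x_p=x_{v_i}$ collapse each pendant coordinate to $x_{v_i}/\lambda$, after which the backbone equations become a second-order linear recurrence with leading coefficient $1$, so the whole eigenvector is a fixed linear function of the single datum $x_{v_1}$ and the eigenspace has dimension at most one. You correctly handle the two points where such arguments usually go wrong: the degenerate coefficient $\lambda-d_i/\lambda=0$ is harmless because the recurrence is solved for $x_{v_{i+1}}$, and the equation at $v_1$ (an end of the backbone, which has only one backbone neighbour) is what reduces the two initial conditions of the recurrence to one. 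The star case $K_{1,n}$ and the trivial paths are dispatched separately, as you indicate. This is at least as elementary as anything in \cite{Wong}, which obtains the statement as a consequence of its machinery around the Parter--Wiener theorem; your route uses nothing beyond the eigenvalue equation.
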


\begin{lem}\label{C3}
Consider a graph $G$ that is  isomorphic to either $C_3(a, a, a)$  or $C_5$, where $C_3(a,a,a)$  is a graph  obtained by adding $a$' pendant vertices to each vertex of $C_3$. Let $\lambda$ be a non-zero eigenvalue of $A(G)$. If $m_\lambda(G)=2$, then $m_\lambda(G-x)=m_\lambda(G)-1=1$ for any $x\in V(G)$.
\end{lem}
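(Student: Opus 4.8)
The plan is to reduce everything to an upper bound via the interlacing inequality (Lemma \ref{interlacing}): since $m_\lambda(G-x)\ge m_\lambda(G)-1=1$ automatically, it is enough to prove $m_\lambda(G-x)\le 1$ for every $x\in V(G)$. If $G\cong C_5$ this is immediate, because $C_5-x\cong P_4$ is a caterpillar and hence, by Lemma \ref{caterpillar}, all of its nonzero eigenvalues are simple. So the real work is for $G\cong C_3(a,a,a)$; write $v_1,v_2,v_3$ for the triangle vertices and let $S$ denote the double star with two adjacent centres each carrying $a$ pendant vertices. Since the automorphism group of $C_3(a,a,a)$ has exactly two vertex orbits --- the triangle $\{v_1,v_2,v_3\}$ and the set of all $3a$ pendant vertices --- and $m_\lambda(G-x)$ depends only on the isomorphism type of $G-x$, I may assume $x$ is a triangle vertex or a pendant vertex.

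If $x=v_1$, then $G-v_1\cong \overline{K_a}\cup S$ (the $a$ pendants at $v_1$ become isolated, and $v_2,v_3$ together with their pendants form $S$). Since $\lambda\ne0$ the isolated vertices contribute nothing, so $m_\lambda(G-v_1)=m_\lambda(S)\le 1$, the bound holding because $S$ is a caterpillar (Lemma \ref{caterpillar}). With the interlacing lower bound this yields $m_\lambda(G-v_1)=1$, and in particular $\lambda$ is a nonzero eigenvalue of $S$ --- a fact I record for the next case.

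If $x=u$ is a pendant vertex, say attached to $v_1$, then $G-u\cong C_3(a-1,a,a)$, the triangle with $a-1$ pendants at one vertex and $a$ at each of the other two. This graph still contains the triangle, so it is not a caterpillar and Lemma \ref{caterpillar} no longer applies --- this is the main obstacle. I would delete a second, carefully chosen vertex: $(G-u)-v_2\cong \overline{K_a}\cup(S-\ell)$, where $\ell$ is a leaf of $S$ (removing $v_2$ isolates its $a$ pendants and leaves the edge $v_1v_3$ with $a-1$ and $a$ pendants at its ends, i.e.\ $S$ with one leaf removed). Since $S$ is a tree with $\operatorname{diam}(S)\le 3$ and $\lambda\in\sigma(S)\setminus\{0\}$ by the previous case, Lemma \ref{star} gives $\lambda\notin\sigma(S-\ell)$; together with $\lambda\ne0$ this forces $m_\lambda((G-u)-v_2)=0$, and then interlacing gives $m_\lambda(G-u)\le m_\lambda((G-u)-v_2)+1=1$, hence $m_\lambda(G-u)=1$.

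Combining the cases, $m_\lambda(G-x)=1=m_\lambda(G)-1$ for every $x\in V(G)$. The delicate point is the pendant case: the second deleted vertex must be a triangle vertex distinct from the attachment point of $u$, so that after discarding the freshly isolated pendants what survives is a double star of diameter at most $3$, the exact configuration Lemma \ref{star} controls; and one must first have verified that $\lambda$ really lies in the spectrum of the ambient double star $S$. Alternatively, this case can be settled by a short explicit computation: writing the eigenvalue equations $\lambda x_v=\sum_{w\sim v}x_w$ on $C_3(a-1,a,a)$ and using $\lambda\ne0$ to eliminate the pendant coordinates, one finds that a $\lambda$-eigenvector is determined up to a scalar with its coordinate at $v_1$ forced to be $0$, so that $m_\lambda(C_3(a-1,a,a))=1$ outright.
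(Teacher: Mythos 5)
Your proof is correct, and for the cycle vertices it is essentially the paper's argument: both observe that deleting a triangle vertex of $C_3(a,a,a)$ (or any vertex of $C_5$) leaves a caterpillar plus isolated vertices, so Lemma \ref{caterpillar} caps the multiplicity at $1$ while Lemma \ref{interlacing} forces it to be at least $m_\lambda(G)-1=1$. Where you genuinely diverge is the pendant-vertex case. The paper invokes the eigenprojection identity (\ref{eigenvalue-equ}): for a pendant vertex $x$ with neighbour $y$ one has $\lambda E_\lambda\mathbf{e}_x=E_\lambda\mathbf{e}_y$, and since the triangle vertex $y$ is already known to satisfy $m_\lambda(G-y)=m_\lambda(G)-1$ (equivalently $E_\lambda\mathbf{e}_y\neq0$, i.e.\ $y$ lies in a star set), the same follows for $x$. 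You instead delete a second, carefully chosen triangle vertex, reduce to a double star of diameter at most $3$ minus a leaf, and combine Lemma \ref{star} with the fact---extracted from your first case---that $\lambda\in\sigma(S)$, to get $m_\lambda(G-u-v_2)=0$ and hence $m_\lambda(G-u)\leq1$ by interlacing. Your route is more elementary, using only the interlacing inequality and the two tree lemmas and no star-set machinery, at the cost of a second deletion and the bookkeeping about which vertex to remove; the paper's star-set argument is shorter and transfers at once to any pendant vertex whose neighbour is already known to be in a star set. Both are complete; the explicit eigenvector computation you sketch at the end is a third valid route but is not needed.
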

\begin{proof}
We can easily  verify that $C_5$ holds. Next, we consider the case where $G$ is isomorphic to $C_3(a, a, a)$.  We will prove this by contradiction. If $x$ is a vertex on $C_3$ and $m_\lambda(G)\leq m_\lambda(G-x)$, then by Lemma \ref{caterpillar}, we have
$$m_\lambda(G)\leq m_\lambda(G-x)=1,$$
which leads to a contradiction. Therefore, according to Lemma \ref{interlacing}, $m_\lambda(G-x)=m_\lambda(G)-1=1$. If $x$ is a pedant vertex of $C_3(a,a,a)$ and is adjacent to $y$, then based on Equ.$(\ref{eigenvalue-equ})$, we have $\lambda E_\lambda e_x=E_\lambda e_y$. As $\lambda\neq0$ and $y$ belongs to a star set of $G$, it follows that $x$ also belongs to a star set of $G$, as required.
\end{proof}

\begin{lem}\label{C5}
Let $G$ be the graph obtained by connecting any two vertices in $C_3(a,a,a)~(\text{or}~C_5)$ with the central vertex of $K_{1,s}$. Then, for any non-zero eigenvalue $\lambda$ of $A(G)$, $m_\lambda(G)\leq2$.
\end{lem}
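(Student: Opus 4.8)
The plan is to reduce, via interlacing, to ruling out $m_\lambda(G)=3$, and then to play Lemma \ref{C3} off against Lemma \ref{caterpillar}. Write $H\in\{C_3(a,a,a),\,C_5\}$, let $w$ be the centre of the attached $K_{1,s}$, and let $r,t$ be the two vertices of $H$ joined to $w$; then $G-w=H\cup sK_1$, so $m_\lambda(G-w)=m_\lambda(H)$ because $\lambda\neq0$. Deleting a vertex $u$ of $H$ that lies on its unique cycle leaves a caterpillar together with isolated vertices ($P_4$ for $H=C_5$; a double star plus $a$ isolated vertices for $H=C_3(a,a,a)$), so $m_\lambda(H)\le m_\lambda(H-u)+1\le 2$ by Lemmas \ref{interlacing} and \ref{caterpillar}. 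Hence $m_\lambda(G)\le m_\lambda(G-w)+1=m_\lambda(H)+1\le 3$, and it is enough to exclude $m_\lambda(G)=3$. Assume $m_\lambda(G)=3$; then $m_\lambda(H)=2$, and Lemma \ref{C3} gives $m_\lambda(H-x)=1$ for every $x\in V(H)$.

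Suppose first that some vertex $u$ of $H$ on its cycle has the property that $G-u$ is again a forest. One checks this holds for $H=C_5$ (take $u\in\{r,t\}$, so that $G-u$ is $P_4$ with $w$ joined to one of its vertices and carrying its $s$ hairs), and for $H=C_3(a,a,a)$ unless $r$ and $t$ both sit at one common triangle vertex $v_1$ (that is, $\{r,t\}=\{v_1,\text{a pendant of }v_1\}$, or $\{r,t\}$ consists of two pendants of $v_1$) — in every remaining $C_3(a,a,a)$ configuration an appropriate triangle vertex can be deleted so that $G-u$ is a single caterpillar together with isolated vertices. For such a $u$, using $(G-u)-w=(H-u)\cup sK_1$ and Lemma \ref{interlacing} twice,
$$2=m_\lambda(G)-1\ \le\ m_\lambda(G-u)\ \le\ m_\lambda\big((H-u)\cup sK_1\big)+1\ =\ m_\lambda(H-u)+1\ =\ 2,$$
so $m_\lambda(G-u)=2$; but $G-u$ is a caterpillar together with isolated vertices, so $m_\lambda(G-u)\le 1$ by Lemma \ref{caterpillar}, a contradiction. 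Hence $m_\lambda(G)\le 2$ in all of these configurations.

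There remain the configurations $H=C_3(a,a,a)$ with $w$ joined to a triangle vertex $v_1$ and a pendant of $v_1$, or to two pendants of a common triangle vertex $v_1$; here no single deletion turns $G$ into one caterpillar, so we argue directly from $(\ref{eigenvalue-equ})$. If $p$ is a pendant of $G$ with neighbour $z$ then $\lambda E_\lambda\mathbf e_p=E_\lambda\mathbf e_z$, so $E_\lambda\mathbf e_p=\lambda^{-1}E_\lambda\mathbf e_z$, and hence $\mathcal E(\lambda)$ is spanned by the at most six projections $E_\lambda\mathbf e_x$ with $x$ a non-pendant vertex. Writing $(\ref{eigenvalue-equ})$ at these vertices and substituting the pendant projections gives, after eliminating the projections at the one or two former pendants adjacent to $w$, a linear system in $E_\lambda\mathbf e_{v_1},E_\lambda\mathbf e_{v_2},E_\lambda\mathbf e_{v_3},E_\lambda\mathbf e_w$; a short case split — according to whether $\lambda^2=a$, and when $\lambda^2=a$ whether a further degeneracy occurs at $v_1$ — shows this system has a solution space of dimension at most $2$, that is, $m_\lambda(G)\le 2$; in fact this direct argument re-proves the whole lemma. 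The only real labour is this bookkeeping, and the main obstacle is to organize the case analysis so that every way of placing the two attachment vertices inside $C_3(a,a,a)$ or $C_5$ is accounted for, and to choose, in each non-exceptional case, a cycle vertex whose deletion leaves a single caterpillar plus isolated vertices.
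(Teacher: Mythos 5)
Your route is genuinely different from the paper's. The paper argues in a few lines: if $m_\lambda(G)\geq 3$, one can choose an eigenvector $Y$ vanishing on two suitably chosen adjacent vertices of the cycle of $C_3(a,a,a)$ (or $C_5$), and the eigen-equation then propagates to $Y=\mathbf{0}$. You instead first get $m_\lambda(G)\leq 3$ by deleting $w$ (interlacing, plus Lemma \ref{caterpillar} applied to $H$ minus a cycle vertex), and then exclude the value $3$ by deleting a well-chosen attachment/cycle vertex $u$ of $H$ so that $G-u$ is a single caterpillar plus isolated vertices, whence $2=m_\lambda(G)-1\leq m_\lambda(G-u)\leq 1$, a contradiction. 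For the configurations where such a $u$ exists this is correct and clean, and your classification of the obstructions is right: the only attachments that resist are $w$ joined to a triangle vertex $v_1$ together with one of its own pendants, or to two pendants of the same $v_1$. What the deletion argument buys is that it avoids the delicacy of choosing the "suitable" pair in the propagation argument; what it costs is exactly the two residual configurations.

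Those two exceptional configurations, however, are not actually proved. You set up a reasonable framework — eliminate the pendant projections via $E_\lambda\mathbf{e}_p=\lambda^{-1}E_\lambda\mathbf{e}_z$ and reduce to a small linear system among $E_\lambda\mathbf{e}_{v_1},E_\lambda\mathbf{e}_{v_2},E_\lambda\mathbf{e}_{v_3},E_\lambda\mathbf{e}_w$ — but then merely assert that ``a short case split \dots shows this system has a solution space of dimension at most $2$,'' without exhibiting the relations, the cases, or the rank computation. That computation is precisely where the content of the lemma sits for these attachments: a priori there can be a residual eigenvector supported near the short cycle through $w$ (for instance, after forcing $Y$ to vanish on the triangle, the leftover system on $\{p,w\}$ admits a nonzero solution exactly when $\lambda=-1$ and $s=0$, so something nontrivial must be ruled out). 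As written, your argument establishes Lemma \ref{C5} only for the non-exceptional attachments; you need to carry out the rank/dimension computation (or find a structural substitute, e.g.\ combining the spectral constraints forced on the two components of $G-v_1$) for the two exceptional ones before the proof is complete.
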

\begin{proof}
We will prove by contradiction. If $m_\lambda(G)\geq3$, then we can construct a non-zero eigenvector $Y$ of $A(G)$ corresponding to $\lambda$ such that there are suitable two adjacent vertices , say $u$ and $v$, in the cycle of $C_3(a,a,a)~(\text{or}~C_5)$ such that $Y(u)=Y(v)=0$. However, according to the eigen-equation $A(G)Y=\lambda Y$, we have $Y=\textbf{0}$, a contradiction.
\end{proof}
\section{Main Result}\label{s-4}

\begin{thm}\label{thm-upper-bound}
Let $G$ be a connected graph with $\lambda \neq 0$ as an eigenvalue. Then $m_\lambda(G) \leq \beta^{\prime}(G)+c(G)$, and the equality hold if and only if $G\cong C_3(a, a, a)~(a\geq0)$ or $G\cong C_5$, or a tree with the diameter $1$, $2$ or $3$.
\end{thm}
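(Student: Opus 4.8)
The plan is to reduce the general connected graph $G$ to the tree case (Theorem~\ref{upper-bound}) by an induction on the cyclomatic number $c(G)$, using edge deletion to control both $m_\lambda$ and $\beta'$. For the \emph{upper bound} $m_\lambda(G)\le\beta'(G)+c(G)$, I would argue as follows. If $c(G)=0$, $G$ is a tree and the bound is exactly Theorem~\ref{upper-bound}. If $c(G)\ge 1$, pick an edge $e=uv$ lying on a cycle, so that $G-e$ is connected with $c(G-e)=c(G)-1$. By the eigenvalue interlacing principle for edge deletion (which follows from Lemma~\ref{interlacing} applied twice, or directly), $m_\lambda(G)\le m_\lambda(G-e)+1$ in general; more carefully, since $A(G)$ and $A(G-e)$ differ by a rank-$2$ matrix, $m_\lambda(G)\le m_\lambda(G-e)+2$, but by choosing $e$ on a cycle one gets the sharper $m_\lambda(G)\le m_\lambda(G-e)+1$ because the rank-one update $A(G)=A(G-e)+\,(\mathbf e_u\mathbf e_v^{T}+\mathbf e_v\mathbf e_u^{T})$ can only change each eigenvalue multiplicity by at most one when $e$ is not a bridge. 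Combined with the easy monotonicity $\beta'(G-e)\le\beta'(G)$ (deleting an edge cannot increase the induced matching number — actually one must be mildly careful: deleting an edge can only decrease or leave unchanged a maximum induced matching, and in any case $\beta'(G)\ge\beta'(G-e)$ is false in general, so instead I would use $\beta'(G-e)\le\beta'(G)+?$; the cleaner route is $\beta'(G)\ge\beta'(G-e)$ can fail, so I will instead keep the cycle edge and use that an induced matching of $G-e$ is \emph{still} an induced matching of $G$ only after possibly removing at most one edge near $e$, giving $\beta'(G)\ge\beta'(G-e)-1$). Putting these together, $m_\lambda(G)\le m_\lambda(G-e)+1\le \beta'(G-e)+c(G-e)+1\le \beta'(G)+c(G)$, closing the induction.

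For the \emph{equality case}, suppose $m_\lambda(G)=\beta'(G)+c(G)$. If $c(G)=0$ this forces $m_\lambda(T)=\beta'(T)$, which by Lemma~\ref{m(T)=b(T)} means $\mathrm{diam}(T)\in\{1,2,3\}$, as claimed. If $c(G)\ge 1$, equality must propagate through every inequality in the induction above: for \emph{every} cycle edge $e$ we must have $m_\lambda(G)=m_\lambda(G-e)+1$, $m_\lambda(G-e)=\beta'(G-e)+c(G-e)$ (so $G-e$ is again an extremal graph, handled inductively), and the induced-matching inequality must be tight. The heart of the argument is then to show that the only extremal graphs with $c(G)\ge 1$ are $C_3(a,a,a)$ and $C_5$. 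Here I would use Lemma~\ref{C3} and Lemma~\ref{C5} as the structural engine: Lemma~\ref{C5} shows that attaching a star to two adjacent vertices of $C_3(a,a,a)$ or $C_5$ already caps the multiplicity at $2$, which rules out building larger extremal unicyclic graphs, and Lemma~\ref{C3} shows $C_3(a,a,a)$ and $C_5$ genuinely achieve multiplicity $2=\beta'+c$ (note $\beta'(C_3(a,a,a))=1$, $c=1$; $\beta'(C_5)=1$, $c=1$). For $c(G)\ge 2$ I expect to derive a contradiction: two independent cycles force, via repeated edge deletions and the interlacing bounds, a drop strictly larger than what equality permits, because one cannot keep \emph{all} the tightness conditions simultaneously once the graph contains two edge-disjoint (or even two distinct) cycles — this is where Lemma~\ref{GuvH} is useful for the case a cycle is joined to the rest of the graph by a bridge.

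The main obstacle, as flagged above, is the bookkeeping on $\beta'$ under edge deletion: unlike $m_0$ or $\beta$, the induced matching number is not monotone under edge deletion in a convenient direction, so the naive induction loses control. I would handle this by \emph{not} deleting an arbitrary cycle edge but rather choosing the edge carefully — e.g.\ an edge on a shortest cycle, or using the "theta-graph / pendant-cycle" dichotomy — so that a maximum induced matching of $G$ restricts to an induced matching of $G-e$ of size at least $\beta'(G)-1$ while simultaneously $c$ drops by exactly $1$; then the arithmetic still closes. A secondary obstacle is verifying that in the equality analysis no vertex of $G$ outside the distinguished cycle can carry positive degree beyond the pendant structure of $C_3(a,a,a)$; this is exactly what Lemmas~\ref{C3} and~\ref{C5} are designed to preclude, and I would invoke them together with Lemma~\ref{caterpillar} (each nonzero eigenvalue of a caterpillar is simple) to kill the remaining configurations. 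Finally, one must separately check the small-diameter tree cases $\mathrm{diam}\le 3$ satisfy $m_\lambda=\beta'$ with $c=0$, which is immediate from Lemma~\ref{m(T)=b(T)}, and confirm $C_3(0,0,0)=C_3$ and $C_5$ directly by computing spectra.
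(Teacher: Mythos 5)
There is a genuine gap in your upper-bound argument, and it occurs at exactly the two places you yourself flag as delicate. First, the interlacing step $m_\lambda(G)\le m_\lambda(G-e)+1$ for a non-bridge edge $e$ is false: $A(G)-A(G-e)=\mathbf e_u\mathbf e_v^{T}+\mathbf e_v\mathbf e_u^{T}$ is a rank-\emph{two} perturbation, so multiplicities can change by $2$ whether or not $e$ lies on a cycle. A concrete counterexample with $\lambda\neq 0$ is $G=K_4$ and $\lambda=-1$: $m_{-1}(K_4)=3$ while $m_{-1}(K_4-e)=1$ for any edge $e$ (all of which lie on cycles), so $m_\lambda(G)=3>m_\lambda(G-e)+1$. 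Second, even granting that step, your own bookkeeping only gives $\beta'(G-e)\le\beta'(G)+1$ (induced matching number can \emph{increase} under edge deletion, as you note), so the chain closes at $m_\lambda(G)\le\beta'(G)+c(G)+1$, one worse than needed; the promise to ``choose the edge carefully'' is not backed by an argument. The equality analysis is likewise only a plan: the reduction of the $c(G)\ge 2$ case and the elimination of all unicyclic configurations other than $C_3(a,a,a)$ and $C_5$ are asserted rather than proved.

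The paper's proof avoids both problems by deleting \emph{vertices} rather than edges. Fix a maximum induced matching $M^*$; every cycle contains an $M^*$-unsaturated vertex (otherwise two consecutive saturated cycle vertices would join two distinct edges of $M^*$, contradicting inducedness). Deleting one such vertex per cycle gives a set $X$ with $|X|\le c(G)$ such that $G-X$ is a forest; Lemma~\ref{interlacing} gives $m_\lambda(G)\le m_\lambda(G-X)+|X|$, Theorem~\ref{upper-bound} bounds each tree component by its induced matching number, and -- this is the step your edge-deletion scheme cannot reproduce -- $\beta'(G-X)=\beta'(G)$ exactly, since $M^*$ survives in $G-X$ while $G-X$ is an induced subgraph of $G$. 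The equality case is then settled by adding the deleted vertices back one at a time and running a case analysis on the resulting unicyclic component (using Lemmas~\ref{GuvH}, \ref{star}, \ref{C3}, \ref{C5}), which is where $C_3(a,a,a)$ and $C_5$ emerge. If you want to salvage your approach, you would need to replace edge deletion by deletion of an unsaturated cycle vertex; as written, the induction does not close.
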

\begin{proof}
Note that for any cycle of $G$, there exists at least one vertex that is not yet covered by the induced matching $M^*$. Therefore, we can choose any one $M^*$-unsaturated vertex, denoted by $x_i$~($1\leq i\leq c(G)$), on each cycle of $G$ to form the vertex set $X$. It is possible that $x_i=x_j$ for $1\leq i, j\leq c(G)$. Suppose $G-X=T_1\cup T_2\cup\cdots\cup T_s\cup I$, where $I$ induces a null graph (i.e., no edges), $T_i~(1\leq i\leq s)$ is a tree with order at least $2$. By applying Lemma \ref{interlacing} and Theorem \ref{upper-bound}, we obtain
\begin{equation}\label{equ-3-1}
\begin{array}{lll}
m_\lambda(G)&\leq&m_\lambda(G-X)+|X|\\
&=&\sum_{i=1}^s m_\lambda(T_i)+|X|\\
&\leq&\sum_{i=1}^s\beta'(T_i)+c(G)\\
&=&\beta'(G)+c(G),
\end{array}
\end{equation}
as required. Next, we will characterize all connected graphs with $m_\lambda(G)=\beta'(G)+c(G)$. By Lemma \ref{m(T)=b(T)}, we only need to consider the case $X\neq\emptyset$. According to the proof of Equ.(\ref{equ-3-1}), equality holds if and only if each inequality involved to become an equality. Hence, $c(G-x_i)=c(G)-1~(1\leq i\leq c(G))$ for any $M^*$-unsaturated vertex on cycle, $m_\lambda(G-Z)=\beta'(G-Z)+c(G-Z')$ and $\beta'(G-Z)=\beta'(G)$ for any $Z\subseteq X$. Moreover, $m_\lambda(T_i)=\beta'(T_i)=1$ by Lemma \ref{m(T)=b(T)}. Suppose $z\in X$ and $H=G-(X-z)$, then $H$ is the union of an uncicyclic graph and some trees and isolated vertices. Moreover, $m_\lambda(H)=\beta'(H)+1=\beta'(G)+1$. We first claim $z$ is only connected to one of non-trivial components of $G-X$. Otherwise, assume $T_2+z$ is unicyclic graph and $T_1$ is adjacent to $z$. Since $m_\lambda(T_1-v)=m_\lambda(T_1)-1$ for any $v\in V(T_1)$ by Lemma \ref{star}, according to Lemma \ref{GuvH}, we have
\begin{equation*}
\begin{array}{lll}
m_\lambda(H)&=&m_\lambda(H-T_1-z)+m_\lambda(T_1)-1\\
&=&\sum_{i=2}^sm_\lambda(T_i)\\
&=&\beta'(H)-1,
\end{array}
\end{equation*}
a contradiction. Let the connected component of $H$ containing $T_2$ be $C^*$. Since the diameter for each $T_i~(1\leq i\leq s)$ is at most $3$ by Lemma \ref{m(T)=b(T)}, which implies that the  graph $C^*$ can take on any of the forms shown in Fig. \ref{fig-1}. Note that $m_\lambda(C^*)=\beta'(C^*)+1$. If  $C^*$ is in form $(1)$, then $m_\lambda(C^*)=2$. In this case, by applying Lemma \ref{interlacing}, we can conclude that $\lambda\in \sigma(T_2)$, $\lambda\in \sigma (C^*-v)$, which means that $a=c$. Moreover, $\lambda\in \sigma(C^*-w)$. It is worth mentioning that $\sigma(C^*-w)=\{\frac{1\pm\sqrt{1+4a}}{2},0^{n-4}, \frac{-1\pm\sqrt{1+4a}}{2}\}$ and $\sigma(T_2)=\sigma(C^*-v)=\{\pm\sqrt{a+1}, 0^{n-2}\}$. Through simple calculation, it is evident that $a=0$ for $\lambda\neq0$. Hence, $G\cong C_3$.

If $C^*$ has the form $(2)$ and $a, c\neq0$. In this case, $m_\lambda(C^*)=3$ and we can construct a non-zero eigenvector $Y$ of $A(C^*)$ corresponding to $\lambda$ such that $Y(z)=Y(v)=0$. However, by eigen-equation $A(C^*)Y=\lambda Y$, we have $Y=\textbf{0}$, a contradiction. Thus, at least one of $a, c$ must be $0$. Without loss of generality, say $c=0$. By applying Lemma \ref{interlacing}, we can show that $\lambda\in \sigma(T_2)$, and $\lambda\in \sigma (C^*-w)$. As a result, we may derive that $a=0$ and consequently $C^*\cong C_4$. However, since $\lambda\neq0$, it is impossible for $G\cong C_4$.

If $C^*$ has form $(3)$, by applying Lemma \ref{interlacing}, $\lambda\in \sigma(T_2)$, $\lambda\in \sigma (C^*-v)$ and $\lambda\in \sigma (C^*-w)$, which means $a=b=c$, as required.

If $C^*$ takes the form $(4)$ and at least one of $a, b, c\neq 0$, then $m_\lambda(C^*)=3$ and it is possible to construct an eigenvector $Y\neq \textbf{0}$ of $A(C^*)$ corresponding to $\lambda$ such that the components $Y(z)=Y(w)=0$. From the eigen-equation $A(C^*)Y=\lambda Y$, we have $Y=\textbf{0}$, which leads to a contradiction. Thus, we conclude that $a=b=c=0$ and $C^*\cong C_5$.

If $C^*$ has the form $(5)$ or $(7)$, then we only need to consider the case when $b\neq0$. In this case, $m_\lambda(C^*)=3$. Similar to the previous case in form $(4)$,  we can construct an eigenvector $Y\neq \textbf{0}$ of $A(C^*)$ corresponding to $\lambda$ such that $Y(z)=Y(w)=0$. Using the eigen-equation $A(C^*)Y=\lambda Y$, we arrive at the contradiction that $Y=\textbf{0}$.

If $C^*$ takes the form $(6)$, we need to consider only the case where $b\neq0$. If $a,c\neq0$, as similar as form $(2)$, by constructing an eigenvector $Y\neq \textbf{0}$ of $A(C^*)$ corresponding to $\lambda$ such that the components $Y(z)=Y(x)=0$,  we have $Y=\textbf{0}$, a contradiction. Thus, we only consider the case $c=0$. Under these conditions, $m_\lambda(C^*)=2$. We can construct an eigenvector $Y\neq \textbf{0}$ of $A(C^*)$ corresponding to $\lambda$ such that the components $Y(w)=0$. Using the eigen-equation $A(C^*)Y=\lambda Y$, we find that $Y(x)=0$, and subsequently $Y(z)=0$. This leads to a contradiction as $Y=\textbf{0}$. To sum up, $C^*\cong C_3(a, a,a)$ or $C^*\cong C_5$.

By adding vertices from $X$ to $G-X$ to form the vertex set $Z$, we continue this process until we find a vertex, say $y\in X-Z$, that is connected at least two vertices to a component of $G+Z$ which is not isomorphic to a tree. As discussed previously, each component of $G-(X-Z)$ is either a tree with diameter of at most $3$, or it is isomorphic to $C_3(a, a, a)$ or $C_5$. Let $H'=G-(X-Z)=H_1\cup H_2\cup\dots \cup H_t\cup I_1$ and $H^*=G-(X-Z-y)$, where $I_1$ induces a null graph, $H_i~(1\leq i\leq t)$ is a connected component with order at least $2$. We claim that $y$ is connected to only one  non-trivial component of $H'$ in $H^*$. Otherwise, assume that $y$ is adjacent to both $H_1$ and $H_2$, where $H_2$ is not a tree and $c(H_2+y)=c(H_2)+1\geq2$. Since $m_\lambda(H_1)=m_\lambda(H_1-v)+1$ for any $v\in V(H_1)$ by Lemmas \ref{star} and \ref{C3}, according to Lemma \ref{GuvH},
\begin{equation*}
\begin{array}{lll}
m_\lambda(H^*)&=&m_\lambda(H'-H_1)+m_\lambda(H_1)-1\\
&=&\sum_{i=1}^tm_\lambda(H_i)-1\\
&=&\sum_{i=1}^t (\beta'(H_i)+c(H_i))-1\\
&\leq&\beta'(H^*)+c(H^*)-2.
\end{array}
\end{equation*}
Since $m_\lambda(H^*)=\beta'(H^*)+c(H^*)$, this leads to a contradiction. Let the connected component of $H^*$ containing $H_2$ be $B^*$. Note that $H_2$ is isomorphic to $C_3(a,a, a)$ or $C_5$, and $m_\lambda(B^*)=\beta'(B^*)+c(B^*)\geq 3$. By Lemma \ref{C5}, a contradiction. As $G$ is connected, we conclude that $G\cong C_3(a,a, a)$ or $G\cong C_5$, as desired.

For the sufficiency part, by simple calculation, we have $m_{-\frac{\sqrt{5}+1}{2}}(C_5)=m_{\frac{\sqrt{5}-1}{2}}(C_5)=2$, $m_3(C_3)=2$ and $m_{-\frac{\sqrt{1+4a}+1}{2}}(C_3(a,a,a))=m_{\frac{\sqrt{1+4a}-1}{2}}(C_3(a,a,a))=2~(a\neq0)$, the proof is complete.
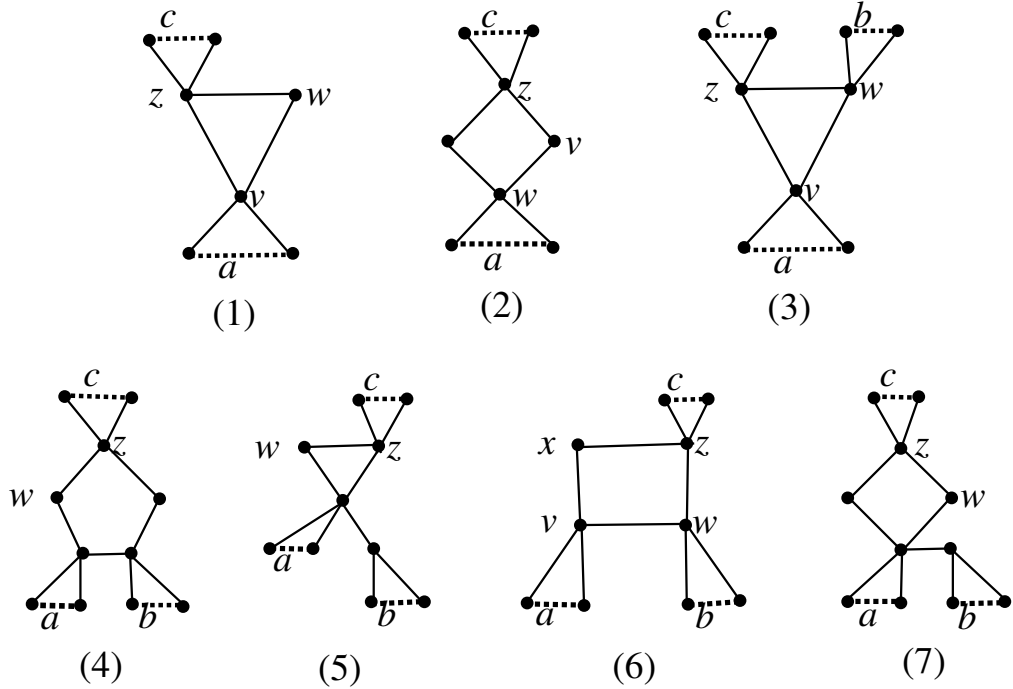
\begin{figure}[htbp]
\centering
\begin{tikzpicture}[x=1.00mm, y=1.00mm, inner xsep=0pt, inner ysep=0pt, outer xsep=0pt, outer ysep=0pt,scale=0.7]
\path[line width=0mm] (49.42,2.37) rectangle +(194.14,134.08);
\definecolor{L}{rgb}{0,0,0}
\definecolor{F}{rgb}{0,0,0}
\path[line width=0.30mm, draw=L, fill=F] (85.12,116.54) circle (1.00mm);
\path[line width=0.30mm, draw=L, fill=F] (105.80,116.54) circle (1.00mm);
\path[line width=0.30mm, draw=L, fill=F] (95.46,97.26) circle (1.00mm);
\path[line width=0.30mm, draw=L, fill=F] (145.58,118.60) circle (1.00mm);
\path[line width=0.30mm, draw=L, fill=F] (134.77,107.78) circle (1.00mm);
\path[line width=0.30mm, draw=L, fill=F] (144.64,97.68) circle (1.00mm);
\path[line width=0.30mm, draw=L, fill=F] (154.99,107.78) circle (1.00mm);
\path[line width=0.30mm, draw=L, fill=F] (85.59,86.45) circle (1.00mm);
\path[line width=0.30mm, draw=L, fill=F] (105.33,86.45) circle (1.00mm);
\path[line width=0.30mm, draw=L, fill=F] (135.48,88.04) circle (1.00mm);
\path[line width=0.30mm, draw=L, fill=F] (154.75,87.57) circle (1.00mm);
\path[line width=0.30mm, draw=L] (84.88,116.30) -- (95.46,96.79);
\path[line width=0.30mm, draw=L] (105.80,116.30) -- (95.93,97.03);
\path[line width=0.30mm, draw=L] (84.65,116.54) -- (106.50,116.77);
\path[line width=0.30mm, draw=L] (95.46,97.26) -- (85.59,86.69);
\path[line width=0.30mm, draw=L] (95.46,97.03) -- (105.09,85.98);
\path[line width=0.30mm, draw=L] (146.05,118.56) -- (135.24,107.31);
\path[line width=0.30mm, draw=L] (134.30,107.55) -- (144.64,97.44);
\path[line width=0.30mm, draw=L] (155.22,107.78) -- (144.88,97.21);
\path[line width=0.30mm, draw=L] (144.17,97.44);
\path[line width=0.30mm, draw=L] (145.11,97.91) -- (135.24,87.34);
\path[line width=0.30mm, draw=L] (144.41,97.68) -- (154.99,87.81);
\path[line width=0.30mm, draw=L] (145.35,118.56) -- (155.69,106.81);
\path[line width=0.30mm, draw=L, fill=F] (69.45,49.91) circle (1.00mm);
\path[line width=0.30mm, draw=L, fill=F] (60.52,40.04) circle (1.00mm);
\path[line width=0.30mm, draw=L, fill=F] (80.03,39.80) circle (1.00mm);
\path[line width=0.30mm, draw=L, fill=F] (65.46,29.46) circle (1.00mm);
\path[line width=0.30mm, draw=L, fill=F] (74.62,29.46) circle (1.00mm);
\path[line width=0.30mm, draw=L] (69.45,50.11) -- (60.29,39.77);
\path[line width=0.30mm, draw=L] (60.29,40.00) -- (65.22,29.43);
\path[line width=0.30mm, draw=L] (69.22,50.58) -- (80.26,39.53);
\path[line width=0.30mm, draw=L] (80.03,40.00) -- (74.62,29.19);
\path[line width=0.30mm, draw=L] (65.46,29.16) -- (75.33,29.39);
\path[line width=0.30mm, draw=L, fill=F] (55.82,19.79) circle (1.00mm);
\path[line width=0.30mm, draw=L, fill=F] (64.99,19.55) circle (1.00mm);
\path[line width=0.30mm, draw=L, fill=F] (74.86,19.79) circle (1.00mm);
\path[line width=0.30mm, draw=L, fill=F] (84.49,19.32) circle (1.00mm);
\path[line width=0.30mm, draw=L] (65.46,29.43) -- (55.35,19.55);
\path[line width=0.30mm, draw=L] (64.99,29.90) -- (64.99,19.55);
\path[line width=0.30mm, draw=L] (74.39,29.90) -- (74.86,19.55);
\path[line width=0.30mm, draw=L] (74.62,28.96) -- (84.96,19.32);
\path[line width=0.30mm, draw=L, fill=F] (107.53,49.60) circle (1.00mm);
\path[line width=0.30mm, draw=L, fill=F] (121.63,49.84) circle (1.00mm);
\path[line width=0.30mm, draw=L, fill=F] (114.81,39.50) circle (1.00mm);
\path[line width=0.30mm, draw=L, fill=F] (120.69,30.33) circle (1.00mm);
\path[line width=0.30mm, draw=L, fill=F] (100.95,30.33) circle (1.00mm);
\path[line width=0.30mm, draw=L, fill=F] (109.17,30.33) circle (1.00mm);
\path[line width=0.30mm, draw=L, fill=F] (120.22,20.22) circle (1.00mm);
\path[line width=0.30mm, draw=L, fill=F] (130.32,20.22) circle (1.00mm);
\path[line width=0.30mm, draw=L] (106.35,49.60) -- (122.10,50.07);
\path[line width=0.30mm, draw=L] (107.76,49.60) -- (115.05,39.03);
\path[line width=0.30mm, draw=L] (122.10,49.84) -- (115.05,39.26);
\path[line width=0.30mm, draw=L] (115.05,39.26) -- (100.71,30.10);
\path[line width=0.30mm, draw=L] (114.81,39.26) -- (109.17,30.10);
\path[line width=0.30mm, draw=L] (114.34,39.50) -- (120.69,30.10);
\path[line width=0.30mm, draw=L] (120.45,30.33) -- (130.32,19.75);
\path[line width=0.30mm, draw=L] (120.69,30.57) -- (120.69,20.46);
\path[line width=0.30mm, draw=L, fill=F] (159.47,50.07) circle (1.00mm);
\path[line width=0.30mm, draw=L, fill=F] (180.15,50.31) circle (1.00mm);
\path[line width=0.30mm, draw=L, fill=F] (159.94,34.80) circle (1.00mm);
\path[line width=0.30mm, draw=L, fill=F] (179.92,34.80) circle (1.00mm);
\path[line width=0.30mm, draw=L, fill=F] (149.83,19.99) circle (1.00mm);
\path[line width=0.30mm, draw=L, fill=F] (160.64,19.52) circle (1.00mm);
\path[line width=0.30mm, draw=L, fill=F] (180.39,19.75) circle (1.00mm);
\path[line width=0.30mm, draw=L, fill=F] (190.26,20.46) circle (1.00mm);
\path[line width=0.30mm, draw=L] (159.00,49.60) -- (181.56,50.31);
\path[line width=0.30mm, draw=L] (159.23,50.31) -- (159.94,34.56);
\path[line width=0.30mm, draw=L] (180.39,50.31) -- (180.15,34.09);
\path[line width=0.30mm, draw=L] (159.70,34.80) -- (180.39,35.03);
\path[line width=0.30mm, draw=L] (159.94,34.80) -- (150.07,20.46);
\path[line width=0.30mm, draw=L] (160.17,34.80) -- (160.64,19.52);
\path[line width=0.30mm, draw=L] (179.92,35.27) -- (180.15,19.75);
\path[line width=0.30mm, draw=L] (179.68,35.50) -- (190.26,20.46);
\path[line width=0.30mm, draw=L, fill=F] (220.81,49.37) circle (1.00mm);
\path[line width=0.30mm, draw=L, fill=F] (210.71,39.97) circle (1.00mm);
\path[line width=0.30mm, draw=L, fill=F] (230.45,39.97) circle (1.00mm);
\path[line width=0.30mm, draw=L, fill=F] (220.81,30.10) circle (1.00mm);
\path[line width=0.30mm, draw=L] (220.81,49.60) -- (210.71,39.50);
\path[line width=0.30mm, draw=L] (210.71,40.20) -- (221.28,29.63);
\path[line width=0.30mm, draw=L] (221.05,49.37) -- (231.39,39.26);
\path[line width=0.30mm, draw=L] (230.45,40.20) -- (220.58,29.39);
\path[line width=0.30mm, draw=L, fill=F] (210.71,20.26) circle (1.00mm);
\path[line width=0.30mm, draw=L, fill=F] (220.81,20.02) circle (1.00mm);
\path[line width=0.30mm, draw=L, fill=F] (230.21,30.37) circle (1.00mm);
\path[line width=0.30mm, draw=L, fill=F] (230.68,20.02) circle (1.00mm);
\path[line width=0.30mm, draw=L, fill=F] (240.55,20.26) circle (1.00mm);
\path[line width=0.30mm, draw=L] (221.05,30.13) -- (210.71,20.73);
\path[line width=0.30mm, draw=L] (221.05,30.37) -- (220.81,20.02);
\path[line width=0.30mm, draw=L] (220.58,30.13) -- (230.92,30.37);
\path[line width=0.30mm, draw=L] (230.68,30.37) -- (230.68,20.26);
\path[line width=0.30mm, draw=L] (230.21,30.37) -- (240.79,20.49);
\path[line width=0.75mm, draw=L, dash pattern=on 0.75mm off 0.50mm] (55.58,19.59) -- (65.22,19.35);
\path[line width=0.60mm, draw=L, dash pattern=on 0.60mm off 0.50mm] (75.09,19.59) -- (84.73,19.59);
\path[line width=0.60mm, draw=L, dash pattern=on 0.60mm off 0.50mm] (85.82,85.95) -- (105.33,86.18);
\path[line width=0.69mm, draw=L, dash pattern=on 0.69mm off 0.50mm] (135.48,88.24) -- (154.99,88.24);
\path[line width=0.69mm, draw=L, dash pattern=on 0.69mm off 0.50mm] (101.18,30.40) -- (109.64,30.17);
\path[line width=0.69mm, draw=L, dash pattern=on 0.69mm off 0.50mm] (120.22,19.82) -- (130.79,20.29);
\path[line width=0.69mm, draw=L, dash pattern=on 0.69mm off 0.50mm] (149.60,19.82) -- (160.64,19.59);
\path[line width=0.69mm, draw=L, dash pattern=on 0.69mm off 0.50mm] (180.62,19.59) -- (190.96,20.29);
\path[line width=0.69mm, draw=L, dash pattern=on 0.69mm off 0.50mm] (210.47,20.29) -- (221.05,20.29);
\path[line width=0.69mm, draw=L, dash pattern=on 0.69mm off 0.50mm] (230.45,19.82) -- (240.79,20.06);
\draw(90.05,72.82) node[anchor=base west]{\fontsize{14.23}{17.07}\selectfont $(1)$};
\draw(140.88,74.41) node[anchor=base west]{\fontsize{14.23}{17.07}\selectfont $(2)$};
\draw(91.23,81.68) node[anchor=base west]{\fontsize{14.23}{17.07}\selectfont $a$};
\draw(141.59,83.27) node[anchor=base west]{\fontsize{14.23}{17.07}\selectfont $a$};
\draw(64.75,5.92) node[anchor=base west]{\fontsize{14.23}{17.07}\selectfont $(4)$};
\draw(110.11,5.45) node[anchor=base west]{\fontsize{14.23}{17.07}\selectfont $(5)$};
\draw(166.05,5.69) node[anchor=base west]{\fontsize{14.23}{17.07}\selectfont $(6)$};
\draw(221.05,6.63) node[anchor=base west]{\fontsize{14.23}{17.07}\selectfont $(7)$};
\draw(57.47,15.09) node[anchor=base west]{\fontsize{14.23}{17.07}\selectfont $a$};
\draw(76.03,15.32) node[anchor=base west]{\fontsize{14.23}{17.07}\selectfont $b$};
\draw(101.65,26.14) node[anchor=base west]{\fontsize{14.23}{17.07}\selectfont $a$};
\draw(121.39,15.56) node[anchor=base west]{\fontsize{14.23}{17.07}\selectfont $b$};
\draw(151.48,15.79) node[anchor=base west]{\fontsize{14.23}{17.07}\selectfont $a$};
\draw(181.80,15.32) node[anchor=base west]{\fontsize{14.23}{17.07}\selectfont $b$};
\draw(212.82,15.56) node[anchor=base west]{\fontsize{14.23}{17.07}\selectfont $a$};
\draw(231.62,15.09) node[anchor=base west]{\fontsize{14.23}{17.07}\selectfont $b$};
\draw(96.88,95.25) node[anchor=base west]{\fontsize{14.23}{17.07}\selectfont $v$};
\draw(77.99,114.59) node[anchor=base west]{\fontsize{14.23}{17.07}\selectfont $z$};
\draw(147.95,115.93) node[anchor=base west]{\fontsize{14.23}{17.07}\selectfont $z$};
\draw(147.06,95.48) node[anchor=base west]{\fontsize{14.23}{17.07}\selectfont $w$};
\draw(70.98,48.49) node[anchor=base west]{\fontsize{14.23}{17.07}\selectfont $z$};
\draw(123.21,47.60) node[anchor=base west]{\fontsize{14.23}{17.07}\selectfont $z$};
\draw(181.67,48.49) node[anchor=base west]{\fontsize{14.23}{17.07}\selectfont $z$};
\draw(223.45,48.05) node[anchor=base west]{\fontsize{14.23}{17.07}\selectfont $z$};
\draw(51.42,38.49) node[anchor=base west]{\fontsize{14.23}{17.07}\selectfont $w$};
\draw(98.09,47.60) node[anchor=base west]{\fontsize{14.23}{17.07}\selectfont $w$};
\draw(232.12,38.27) node[anchor=base west]{\fontsize{14.23}{17.07}\selectfont $w$};
\draw(181.22,33.38) node[anchor=base west]{\fontsize{14.23}{17.07}\selectfont $w$};
\draw(152.38,33.40) node[anchor=base west]{\fontsize{14.23}{17.07}\selectfont $v$};
\draw(151.71,48.37) node[anchor=base west]{\fontsize{14.23}{17.07}\selectfont $x$};
\path[line width=0.30mm, draw=L, fill=F] (78.09,126.95) circle (1.00mm);
\path[line width=0.30mm, draw=L, fill=F] (90.60,127.18) circle (1.00mm);
\path[line width=0.30mm, draw=L] (77.42,126.95) -- (85.46,116.45);
\path[line width=0.30mm, draw=L] (90.82,127.18) -- (84.79,116.01);
\path[line width=0.30mm, draw=L, fill=F] (137.96,128.29) circle (1.00mm);
\path[line width=0.30mm, draw=L, fill=F] (150.92,128.74) circle (1.00mm);
\path[line width=0.30mm, draw=L] (138.18,128.07) -- (145.56,118.69);
\path[line width=0.30mm, draw=L] (150.47,129.19) -- (146.45,118.46);
\path[line width=0.30mm, draw=L, fill=F] (62.00,59.23) circle (1.00mm);
\path[line width=0.30mm, draw=L, fill=F] (74.74,59.01) circle (1.00mm);
\path[line width=0.30mm, draw=L, fill=F] (117.86,58.56) circle (1.00mm);
\path[line width=0.30mm, draw=L, fill=F] (127.01,58.78) circle (1.00mm);
\path[line width=0.30mm, draw=L, fill=F] (175.94,58.56) circle (1.00mm);
\path[line width=0.30mm, draw=L, fill=F] (184.21,58.78) circle (1.00mm);
\path[line width=0.30mm, draw=L, fill=F] (215.71,58.97) circle (1.00mm);
\path[line width=0.30mm, draw=L, fill=F] (224.20,59.20) circle (1.00mm);
\path[line width=0.30mm, draw=L] (61.78,59.45) -- (69.60,49.85);
\path[line width=0.30mm, draw=L] (74.51,59.68) -- (69.38,49.40);
\path[line width=0.30mm, draw=L] (117.86,58.78) -- (122.10,48.51);
\path[line width=0.30mm, draw=L] (127.24,59.23) -- (121.21,48.51);
\path[line width=0.30mm, draw=L] (175.94,58.78) -- (180.63,50.07);
\path[line width=0.30mm, draw=L] (184.65,59.23) -- (179.96,49.62);
\path[line width=0.30mm, draw=L] (215.48,58.78) -- (221.07,48.95);
\path[line width=0.30mm, draw=L] (224.20,59.01) -- (220.62,48.73);
\path[line width=0.30mm, draw=L, fill=F] (190.56,117.65) circle (1.00mm);
\path[line width=0.30mm, draw=L, fill=F] (211.25,117.65) circle (1.00mm);
\path[line width=0.30mm, draw=L, fill=F] (200.90,98.38) circle (1.00mm);
\path[line width=0.30mm, draw=L, fill=F] (191.03,87.57) circle (1.00mm);
\path[line width=0.30mm, draw=L, fill=F] (210.78,87.57) circle (1.00mm);
\path[line width=0.30mm, draw=L] (190.33,117.42) -- (200.90,97.91);
\path[line width=0.30mm, draw=L] (211.25,117.42) -- (201.38,98.15);
\path[line width=0.30mm, draw=L] (190.09,117.65) -- (211.95,117.89);
\path[line width=0.30mm, draw=L] (200.90,98.38) -- (191.03,87.81);
\path[line width=0.30mm, draw=L] (200.90,98.15) -- (210.54,87.10);
\path[line width=0.60mm, draw=L, dash pattern=on 0.60mm off 0.50mm] (191.27,87.07) -- (210.78,87.30);
\draw(195.50,73.94) node[anchor=base west]{\fontsize{14.23}{17.07}\selectfont $(3)$};
\draw(196.67,82.80) node[anchor=base west]{\fontsize{14.23}{17.07}\selectfont $a$};
\draw(202.32,96.37) node[anchor=base west]{\fontsize{14.23}{17.07}\selectfont $v$};
\draw(183.43,115.71) node[anchor=base west]{\fontsize{14.23}{17.07}\selectfont $z$};
\path[line width=0.30mm, draw=L, fill=F] (183.54,128.07) circle (1.00mm);
\path[line width=0.30mm, draw=L, fill=F] (196.05,128.29) circle (1.00mm);
\path[line width=0.30mm, draw=L] (182.87,128.07) -- (190.91,117.57);
\path[line width=0.30mm, draw=L] (196.27,128.29) -- (190.24,117.12);
\path[line width=0.30mm, draw=L, fill=F] (210.35,128.55) circle (1.00mm);
\path[line width=0.30mm, draw=L, fill=F] (220.18,128.77) circle (1.00mm);
\path[line width=0.30mm, draw=L] (210.35,128.10) -- (211.24,117.61);
\path[line width=0.30mm, draw=L] (220.62,129.22) -- (211.01,117.16);
\path[line width=0.60mm, draw=L, dash pattern=on 0.60mm off 0.50mm] (78.09,127.18) -- (90.60,127.40);
\path[line width=0.60mm, draw=L, dash pattern=on 0.60mm off 0.50mm] (138.18,128.29) -- (150.92,128.52);
\path[line width=0.60mm, draw=L, dash pattern=on 0.60mm off 0.50mm] (183.98,127.85) -- (196.27,127.85);
\path[line width=0.60mm, draw=L, dash pattern=on 0.60mm off 0.50mm] (210.35,128.74) -- (220.62,128.74);
\path[line width=0.60mm, draw=L, dash pattern=on 0.60mm off 0.50mm] (61.56,59.45) -- (74.96,59.01);
\path[line width=0.60mm, draw=L, dash pattern=on 0.60mm off 0.50mm] (117.63,58.78) -- (127.68,58.78);
\path[line width=0.60mm, draw=L, dash pattern=on 0.60mm off 0.50mm] (176.16,58.56) -- (184.65,58.78);
\path[line width=0.60mm, draw=L, dash pattern=on 0.60mm off 0.50mm] (215.71,59.23) -- (224.64,59.01);
\draw(80.10,129.41) node[anchor=base west]{\fontsize{14.23}{17.07}\selectfont $c$};
\draw(141.09,129.41) node[anchor=base west]{\fontsize{14.23}{17.07}\selectfont $c$};
\draw(185.55,129.41) node[anchor=base west]{\fontsize{14.23}{17.07}\selectfont $c$};
\draw(211.79,129.413) node[anchor=base west]{\fontsize{14.23}{17.07}\selectfont $b$};
\draw(65.58,61.17) node[anchor=base west]{\fontsize{14.23}{17.07}\selectfont $c$};
\draw(118.75,60.50) node[anchor=base west]{\fontsize{14.23}{17.07}\selectfont $c$};
\draw(176.39,60.50) node[anchor=base west]{\fontsize{14.23}{17.07}\selectfont $c$};
\draw(216.82,61.17) node[anchor=base west]{\fontsize{14.23}{17.07}\selectfont $c$};
\draw(107.88,114.74) node[anchor=base west]{\fontsize{14.23}{17.07}\selectfont $w$};
\draw(212.79,115.88) node[anchor=base west]{\fontsize{14.23}{17.07}\selectfont $w$};
\draw(157.03,105.33) node[anchor=base west]{\fontsize{14.23}{17.07}\selectfont $v$};
\end{tikzpicture}%
\caption{ All possible forms of graph $C^*$.}\label{fig-1}
\end{figure}
\end{proof}
Note that $\sigma(K_{1,n-1})=\{-\sqrt{n-1},0^{n-2},\sqrt{n-1}\}$, $\sigma(C_3(a,a,a))=\{(-\frac{\sqrt{1+4a}+1}{2})^2,1-\sqrt{1+a},(\frac{\sqrt{1+4a}-1}{2})^2,1+\sqrt{1+a}\}$,
$\sigma(C_5)=\{(-\frac{\sqrt{5}+1}{2})^2,(\frac{\sqrt{5}-1}{2})^2,2\}$. $K_{1,n-1}$ cannot have eigenvalues of $-\frac{\sqrt{5}+1}{2}$ and $\frac{\sqrt{5}-1}{2}$, and $C_3(a,a,a)$ has eigenvalues of $-\frac{\sqrt{5}+1}{2}$ and $\frac{\sqrt{5}-1}{2}$ if and only if $a=1$.

The following corollary can be derived from Theorem \ref{thm-upper-bound}.
\begin{cor}\label{m=b+c-1}
Let $G$ be a connected graph with $\lambda \neq 0$ as an eigenvalue of multiplicity $k \geq 1$. If the diameter of $G$ is at least $4$, then $m_\lambda(G) \leq \beta^{\prime}(G)+c(G)-1$.
\end{cor}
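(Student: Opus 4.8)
The plan is to read the corollary off directly from Theorem \ref{thm-upper-bound}. That theorem already supplies the inequality $m_\lambda(G)\le \beta'(G)+c(G)$ for every connected graph $G$ having $\lambda\neq 0$ in its spectrum, and, crucially, it lists exactly the graphs for which equality is attained: $G\cong C_3(a,a,a)$ with $a\ge 0$, $G\cong C_5$, or $G$ a tree of diameter $1$, $2$ or $3$. So the entire task reduces to observing that none of these extremal graphs has diameter at least $4$.

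First I would check the diameters of the equality family. For $a=0$ the graph $C_3(a,a,a)$ is just $C_3$, which has diameter $1$; for $a\ge 1$ the graph $C_3(a,a,a)$ has diameter $3$, the maximum being realized by two pendant vertices attached to distinct vertices of the triangle (pendant–vertex–vertex–pendant); $C_5$ has diameter $2$; and a tree of diameter $1$, $2$ or $3$ trivially has diameter at most $3$. Consequently every graph in the equality list of Theorem \ref{thm-upper-bound} satisfies $\operatorname{diam}\le 3$.

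Now suppose $\operatorname{diam}(G)\ge 4$. Then $G$ is not isomorphic to any graph in that list, so the inequality $m_\lambda(G)\le \beta'(G)+c(G)$ of Theorem \ref{thm-upper-bound} cannot be an equality; it is strict. Since $m_\lambda(G)$ and $\beta'(G)+c(G)$ are integers, strictness immediately yields $m_\lambda(G)\le \beta'(G)+c(G)-1$, which is precisely the assertion of the corollary. There is no real obstacle in this argument: all the work is already carried out in the proof of Theorem \ref{thm-upper-bound}, and the only thing to verify here is the elementary fact that the finitely-described extremal families have small diameter.
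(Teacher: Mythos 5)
Your proposal is correct and is exactly the derivation the paper intends: the paper states the corollary follows from Theorem \ref{thm-upper-bound} without further argument, and the point is precisely that every graph in the equality family ($C_3(a,a,a)$, $C_5$, trees of diameter at most $3$) has diameter at most $3$, so a graph of diameter at least $4$ cannot attain equality, and integrality gives the $-1$.
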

In the following, we will characterize all connected graphs with diameter at least $4$ and $\beta'(G)\geq3$ that have a nonzero eigenvalue $\lambda$ with multiplicity $\beta'(G)+c(G)-1$.

\begin{lem}\label{components}
Let $G$ be a connected graph with diameter at least $4$ and $\beta'(G)\geq3$ which has a nonzero eigenvalue $\lambda$. For any $M^*$-unsaturated vertex $x$ on the cycle, suppose $G-x=H_1\cup H_2\cup\cdots\cup H_l\cup I~(l\geq1)$, where $I$ induces a null graph and $H_i~(1\leq i\leq l)$ is a connected component with order at least $2$. If $m_\lambda(G)=\beta'(G)+c(G)-1$, then $m_\lambda(G-x)=\beta'(G-x)+c(G-x)-1$, $c(G-x)=c(G)-1$, and $l\leq2$. Furthermore, if $G-x=H_1\cup H_2\cup I$, then we have $m_\lambda(H_1)=\beta'(H_1)+c(H_1)-1$, $m_\lambda(H_2)=\beta'(H_2)+c(H_2)$, and $x$ is adjacent to two vertices of $H_2$.
\end{lem}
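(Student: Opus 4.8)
The plan is to combine Lemma~\ref{interlacing}, Theorem~\ref{thm-upper-bound} applied componentwise, and the ``peeling'' Lemma~\ref{GuvH}. Call a connected graph $H$ with $\lambda\in\sigma(H)$ \emph{extremal} if $m_\lambda(H)=\beta'(H)+c(H)$; by Theorem~\ref{thm-upper-bound} this forces $H\cong C_3(a,a,a)$, $H\cong C_5$, or $H$ a tree of diameter at most $3$, and in each of these cases $\beta'(H)=1$ (use Lemmas~\ref{m(T)=b(T)} and \ref{caterpillar} in the tree case). First I would record the bookkeeping: since $x$ is $M^*$-unsaturated, $M^*$ is still an induced matching of the induced subgraph $G-x$, so $\beta'(G-x)=\beta'(G)$; also $m_\lambda(G-x)=\sum_i m_\lambda(H_i)$, $c(G-x)=\sum_i c(H_i)$, and $\deg_G(x)=c(G)-c(G-x)+\omega(G-x)$ with $\omega(G-x)=l+|I|$. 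Since $x$ lies on a cycle, its two cycle-neighbours lie in one component of $G-x$, so $\omega(G-x)\le\deg_G(x)-1$ and hence $c(G-x)\le c(G)-1$. Feeding $m_\lambda(H_i)\le\beta'(H_i)+c(H_i)$ and Lemma~\ref{interlacing} into the hypothesis $m_\lambda(G)=\beta'(G)+c(G)-1$ gives
\[
\beta'(G)+c(G)-2\ \le\ m_\lambda(G)-1\ \le\ m_\lambda(G-x)\ \le\ \beta'(G)+c(G-x)\ \le\ \beta'(G)+c(G)-1 ,
\]
so exactly one of two situations occurs: (I) $c(G-x)=c(G)-1$ and $m_\lambda(G-x)=m_\lambda(G)-1$, which is precisely the first two assertions; or (II) $m_\lambda(G-x)=\beta'(G-x)+c(G-x)$, forcing every $H_i$ to be extremal (this covers both $c(G-x)=c(G)-2$ and $c(G-x)=c(G)-1$ with $m_\lambda(G-x)=m_\lambda(G)$).

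The heart of the argument is the \emph{peeling move}: if some extremal $H_i$ is joined to $x$ by exactly one edge $wx$ with $w\in V(H_i)$, then $m_\lambda(G)=m_\lambda(G-x)-1$. Indeed $G$ is then obtained from $H_i$ and the connected graph $G-V(H_i)$ by adding the single edge $wx$; by Lemma~\ref{C3} in the cyclic cases (where $m_\lambda(H_i)=2$) or by Lemma~\ref{star} in the tree case (where $m_\lambda(H_i)=1$ and $m_\lambda(H_i-w)=0$) we get $m_\lambda(H_i)=m_\lambda(H_i-w)+1$, so Lemma~\ref{GuvH} yields $m_\lambda(G)=m_\lambda(G-V(H_i)-x)+m_\lambda(H_i)-1=\sum_{i'\ne i}m_\lambda(H_{i'})+m_\lambda(H_i)-1=m_\lambda(G-x)-1$. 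I use this to kill situation (II): there $m_\lambda(G)\ge m_\lambda(G-x)$, so the peeling move can apply to \emph{no} singly attached component; since all $H_i$ are extremal, each $H_i$ must meet $N_G(x)$ in at least two vertices, whence $\deg_G(x)\ge 2l+|I|$, while the deletion identity together with $c(G)-c(G-x)\le 2$ gives $\deg_G(x)\le l+|I|+2$; therefore $l\le 2$. But in situation (II) also $\beta'(G)=\beta'(G-x)=\sum_i\beta'(H_i)=l$, so $l\ge 3$ --- a contradiction. Hence situation (I) holds.

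It remains to bound $l$ and treat $l=2$. In situation (I) we have $\deg_G(x)=l+|I|+1$, and since every $H_i$ meets $N_G(x)$, exactly one component is joined to $x$ by two edges and all the rest by exactly one; moreover $\sum_i(\beta'(H_i)+c(H_i)-m_\lambda(H_i))=1$, so exactly one component $H_{j_0}$ is non-extremal and all others are extremal. If $l\ge 3$, at least $l-1\ge 2$ components are singly attached, so at least one singly attached component is extremal; peeling it off gives $m_\lambda(G)=m_\lambda(G-x)-1$, contradicting $m_\lambda(G)=m_\lambda(G-x)+1$. Hence $l\le 2$. Finally, if $G-x=H_1\cup H_2\cup I$, exactly one of $H_1,H_2$ is non-extremal and exactly one of them carries the two neighbours of $x$; were the non-extremal component to carry them, the extremal one would be singly attached and the peeling move would again contradict $m_\lambda(G)=m_\lambda(G-x)+1$. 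So the non-extremal component is singly attached and the extremal one carries the two neighbours; relabelling so that $H_1$ is non-extremal and $H_2$ extremal gives $m_\lambda(H_1)=\beta'(H_1)+c(H_1)-1$, $m_\lambda(H_2)=\beta'(H_2)+c(H_2)$, and $x$ adjacent to two vertices of $H_2$.

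The main obstacle is organising the case distinction so that the peeling move is always available: one has to control $c(G-x)$, $m_\lambda(G-x)$, $\deg_G(x)$ and the identity of the unique non-extremal component simultaneously, and check that in every configuration surviving the opening inequalities there really is an extremal component hanging off $x$ by a single edge. Once that is set up, everything else is routine juggling of the cyclomatic identity $\deg_G(x)=c(G)-c(G-x)+\omega(G-x)$ with Theorem~\ref{thm-upper-bound}.
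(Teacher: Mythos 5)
Your proposal is correct and follows essentially the same route as the paper: interlacing plus the componentwise bound $m_\lambda(H_i)\le\beta'(H_i)+c(H_i)$ from Theorem \ref{thm-upper-bound} to set up the equality chain, and then Lemma \ref{GuvH} (with Lemmas \ref{star} and \ref{C3}) to peel off a singly attached extremal component and derive a contradiction in each bad configuration. Your bookkeeping via $\deg_G(x)=c(G)-c(G-x)+\omega(G-x)$ is in fact a slightly more careful justification than the paper gives for why a singly attached extremal component must exist.
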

\begin{proof}
Let $G-x=H_1\cup H_2\cup\cdots\cup H_l\cup I~(l\geq1)$. According to Lemma \ref{interlacing}, we will discuss the following two cases.
{\flushleft  \bf Case 1.} If $m_\lambda(G)\leq m_\lambda(G-x)$.

In this case, we have
\begin{equation}
\begin{array}{lll}
\beta'(G)+c(G)-1=m_\lambda(G)&\leq&m_\lambda(G-x)\\
&=&\sum_{i=1}^lm_\lambda(H_i)\\
&\leq&\sum_{i=1}^l(\beta'(H_i)+c(H_i))\\
&\leq&\beta'(G)+c(G)-1.
\end{array}
\end{equation}
 The equality holds if and only if each inequality involved to become an equality. Hence, all components of $G-x$ with $m_\lambda (H_i)=\beta'(H_i)+c(H_i)$ for $1\leq i\leq l$, $c(G-x)=c(G)-1$ and $\beta'(G)=\beta'(G-x)$.

According to Theorem \ref{thm-upper-bound},  $\beta'(H_i)=1~(1\leq i\leq l)$. Since $\beta'(G)\geq 3$, we have $l\geq3$. Suppose that $x$ is adjacent two vertices of $H_1$, by applying Lemmas \ref{GuvH},\ref{star}, and \ref{C3}, we have
\begin{equation*}
\begin{array}{lll}
m_\lambda(G)&=&m_\lambda(G-H_l-x)+m_\lambda(H_l)-1\\
&=&\sum_{i=1}^{l-1}m_\lambda(H_i)+m_\lambda(H_l)-1\\
&=&\sum_{i=1}^{l}(\beta'(H_i)+c(H_i))-1\\
&=&\beta'(G)+c(G)-2,
\end{array}
\end{equation*}
a contradiction.
{\flushleft  \bf Case 2.} If $m_\lambda(G)=m_\lambda(G-x)+1$.

In this case,
\begin{equation*}
\begin{array}{lll}
\beta'(G)+c(G)-1=m_\lambda(G)&=&m_\lambda(G-x)+1\\
&=&\sum_{i=1}^lm_\lambda(H_i)+1\\
&\leq&\sum_{i=1}^l(\beta'(H_i)+c(H_i))+1\\
&\leq&\beta'(G)+c(G).
\end{array}
\end{equation*}
Thus, $m_\lambda(G)=\beta'(G)+c(G)-1$ hold either  $G-x$  has only one component (denoted by $H_1$) with $m_\lambda(H_1)=\beta'(H_1)+c(H_1)-1$, $c(G-x)=c(G)-1$, and other components (if they exit) with $m_\lambda(H_i)=\beta'(H_i)+c(H_i)$ for $2\leq i\leq l$, or $c(G-x)=c(G)-2$ and all components $G-x$ with $m_\lambda (H_i)=\beta'(H_i)+c(H_i)$ for $1\leq i\leq l$. If the former case where $l\geq3$ or the latter case occurs, similar to Case $1$, there exists a component, say $H_l$, with $m_\lambda(H_l)=\beta'(H_l)+c(H_l)$  that has only one vertex adjacent to $x$.  Applying Lemmas \ref{GuvH}, \ref{star}, and \ref{C3}, we obtain
\begin{equation}\label{equ-3-4-1}
\begin{array}{lll}
m_\lambda(G)&=&m_\lambda(G-H_l-x)+m_\lambda(H_l)-1\\
&=&\sum_{i=1}^{l-1}m_\lambda(H_i)+m_\lambda(H_l)-1\\
&=&\beta'(G)+c(G)-3,
\end{array}
\end{equation}
a contradiction. Thus, $l\leq2$ for the former case. If $x$ is adjacent to two vertices of $H_1$,  then we can reach a contradiction similar to  one given in Equ.(\ref{equ-3-4-1}). Therefore, $x$ is adjacent to two vertices of $H_2$.
\end{proof}
\begin{lem}\label{unciyclic-hold}
Let $G$ be a connected unicyclic graph with diameter at least $4$ and $\lambda$ as a nonzero eigenvalue. If $\beta'(G)\geq3$, then $m_\lambda(G)=\beta'(G)$ if and only if there exists a vertex $w$ such that $G-w=H_1 \cup \ldots \cup H_{s} \cup I~(s=\beta'(G))$, where $I$ induces a null graph, $H_1\cong C_3(a,a,a)$ with $m_\lambda(H_1)=2$, $H_i~(2\leq i\leq s)$ is a tree with diameter of at most $3$ with $m_\lambda(H_i)=1~$. Furthermore, either $diam(H_i+w)=2$ for some $i$, or $I=\emptyset$ and $diam(H_i+w)=3$ for all $i$.
\end{lem}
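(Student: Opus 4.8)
The plan is to mimic the structure of Lemma~\ref{components} and the proof of Theorem~\ref{thm-upper-bound}, but now reduced to the unicyclic setting where $c(G)=1$, so that the target equality reads $m_\lambda(G)=\beta'(G)=\beta'(G)+c(G)-1$. For the necessity direction I would first invoke Lemma~\ref{components}: choose an $M^*$-unsaturated vertex $x$ on the unique cycle of $G$, so that $G-x=H_1\cup\cdots\cup H_l\cup I$ is a forest, $c(G-x)=0$, $l\le 2$, and $m_\lambda(G-x)=\beta'(G-x)-1$. If $l=1$ then $H_1$ is a single tree with $m_\lambda(H_1)=\beta'(H_1)-1$ and $\beta'(H_1)=\beta'(G)$; applying Theorem~\ref{extremal-tree} to $H_1$ together with the fact that adding the vertex $x$ back (adjacent to one or two vertices of $H_1$) must keep the multiplicity equal to $\beta'(G)$ — here Lemma~\ref{GuvH}, Lemma~\ref{star}, Lemma~\ref{interlacing} do the bookkeeping — I would force $x$ together with part of $H_1$ to reassemble into a $C_3(a,a,a)$ component, pinning down the structure. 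If $l=2$, Lemma~\ref{components} already gives $m_\lambda(H_1)=\beta'(H_1)-1$ (a tree, so $\beta'(H_1)\ge 2$ and it is one of the graphs in Theorem~\ref{extremal-tree}), $m_\lambda(H_2)=\beta'(H_2)$ (a tree, so by Lemma~\ref{m(T)=b(T)} it has diameter $\le 3$), and that $x$ is adjacent to two vertices of $H_2$. The component $H_2+x$ is then unicyclic of diameter $\le 3$-plus-one-edge, and a finite case analysis — exactly the forms $(1)$--$(7)$ of Fig.~\ref{fig-1} — identifies $H_2+x$ with $C_3(a,a,a)$ or shows the remaining cases are impossible for $\lambda\ne 0$, just as in Theorem~\ref{thm-upper-bound}. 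Merging the two sub-cases, the vertex $w$ in the statement is either $x$ itself or the ``center'' vertex produced by Theorem~\ref{extremal-tree}(b) applied to $H_1$; one then checks $G-w$ has the claimed shape, that $H_1\cong C_3(a,a,a)$ with $m_\lambda(H_1)=2$, and the diameter conditions on $H_i+w$ are inherited from Theorem~\ref{extremal-tree}.

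For the sufficiency direction, suppose $G-w=H_1\cup\cdots\cup H_s\cup I$ with $H_1\cong C_3(a,a,a)$, $m_\lambda(H_1)=2$, each $H_i$ $(2\le i\le s)$ a tree of diameter $\le 3$ with $m_\lambda(H_i)=1$, $s=\beta'(G)$, and the stated diameter condition on the $H_i+w$. First, $\beta'(G)=s$: the $s$ components each contribute an induced matching edge that stays induced in $G$ because any induced matching edge inside $H_i$ is ``shielded'' by the diameter-$\le 3$ (resp. $C_3(a,a,a)$) structure from the single vertex $w$, and no larger induced matching is possible since deleting $w$ leaves exactly $s$ nontrivial components each of induced matching number $1$ (recall $\beta'(C_3(a,a,a))=1$). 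Second, $c(G)=1$ since $H_1$ carries the only cycle and $w$ is joined by single edges into each component. Third, $m_\lambda(G)=s$: apply Lemma~\ref{interlacing} at $w$ to get $m_\lambda(G)\ge m_\lambda(G-w)-1=\bigl(2+(s-1)\bigr)-1=s$, and the reverse inequality $m_\lambda(G)\le\beta'(G)+c(G)-1=s$ is Corollary~\ref{m=b+c-1} once we check $diam(G)\ge 4$ (which follows from $s=\beta'(G)\ge 3$ together with the diameter conditions on the $H_i+w$). Hence $m_\lambda(G)=s=\beta'(G)$.

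The genuinely delicate part will be the reduction showing $l\le 2$ is not enough on its own — one must actually reconstruct $C_3(a,a,a)$ from $x$ and a piece of the forest, and this is where I would lean hardest on the form analysis of Fig.~\ref{fig-1}: verifying that the eigenvalue condition $\lambda\in\sigma(H_2)\cap\sigma(H_2+x-v)\cap\cdots$ forces the three pendant-bunches to have equal size $a$, exactly as in the $C^*$ case-check inside Theorem~\ref{thm-upper-bound}, and ruling out the stray possibilities $C_4$, $C_5$ (which would make $c(G)=1$ but with the wrong component, contradicting $m_\lambda=\beta'$ when $\beta'\ge 3$) using Lemma~\ref{C5}. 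A secondary subtlety is handling the case where $x$ lies on the cycle but the cycle is itself $C_3$ with pendants already attached; one checks that choosing a different $M^*$-unsaturated vertex of the triangle does not change the conclusion. I would also need Lemma~\ref{caterpillar} to exclude caterpillar-type unicyclic configurations that would otherwise sneak in through Theorem~\ref{extremal-tree}(a). Once the forms are exhausted, assembling the global statement about $G$ and the vertex $w$ is bookkeeping of the kind already carried out in Lemma~\ref{components}.
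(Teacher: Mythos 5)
Your overall strategy for the necessity direction is essentially the paper's: pick an $M^*$-unsaturated vertex $x$ on the cycle, invoke Lemma \ref{components} to get $l\le 2$ and a unique deficient component, apply Theorem \ref{extremal-tree} to that component, and reassemble a $C_3(a,a,a)$ around $x$ via the equality case of Theorem \ref{thm-upper-bound} (equivalently, the Fig.~\ref{fig-1} analysis). Your sufficiency argument is a genuinely different and arguably cleaner route: the paper computes $m_\lambda(G)$ directly with Lemma \ref{GuvH}, whereas you sandwich it between the interlacing lower bound $m_\lambda(G)\ge m_\lambda(G-w)-1=(2+(s-1))-1=s$ and the upper bound of Corollary \ref{m=b+c-1}; this works, provided you actually verify $\beta'(G)=s$ from the stated diameter conditions on the $H_i+w$.

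The genuine gap is in the necessity direction: you never handle the possibility that the deficient component $F_1$ of $G-x$ falls under form (a) of Theorem \ref{extremal-tree}, i.e. $F_1$ is a caterpillar of diameter $4$, $5$ or $6$ with $\beta'(F_1)=2$ and $m_\lambda(F_1)=1$. Your plan of taking $w$ to be the distinguished vertex produced by Theorem \ref{extremal-tree} only yields such a vertex in form (b); in form (a) there is no such center, and Lemma \ref{caterpillar}, which you propose to use to ``exclude caterpillar-type configurations,'' only tells you that $m_\lambda(F_1)=1$ --- it does not rule the configuration out. The paper needs a separate argument here (Case 2 of its proof): when $F_2\ncong P_2$ or $\beta'(G)=\beta'(G-z)$ it re-chooses the unsaturated cycle vertex so that the new deficient component has $\beta'\ge 3$ and is therefore forced into form (b), and in the remaining subcase $F_2\cong P_2$ it derives a contradiction via Lemma \ref{GuvH}. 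Without an argument of this kind your case analysis is incomplete. Two smaller inaccuracies: $w$ can never be $x$ itself, since the $C_3(a,a,a)$ component of $G-w$ contains the entire cycle and hence $w$ lies off the cycle while $x$ lies on it; and in the $l=2$ case the graph that must be identified with $C_3(a,a,a)$ is the full component of $G-w$ containing $H_2+x$ (which may also absorb the piece of $H_1-w$ attached to $x$), not $H_2+x$ alone.
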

\begin{proof}
Firstly, we demonstrate the sufficiency part. We only need to show that $m_\lambda(G)=\beta'(G)=s$, where $s$ is the number of components in $G-w$. Consider a vertex $u_s$ which is adjacent to $w$ in $H_s$. Note that $m_\lambda(H_1)=2$, $m_\lambda(H_i)=1~(2\leq i\leq s)$, and each $H_i~(2\leq i\leq s)$ is a tree with diameter at most $3$. By applying Lemmas \ref{GuvH} and \ref{star},  $\lambda\notin \sigma(H_i-u_i)~(2\leq i\leq s)$, and
\begin{equation*}
m_\lambda(G)=m_\lambda(G-H_s-w)=\sum_{i=1}^{s-1} m_\lambda(H_i)=s,
\end{equation*}
which completes the proof that $m_\lambda(G)=s$. Note that either $diam(H_i+w)=2$ for some $i$, or $I=\emptyset$ and $diam(H_i+w)=3$ for all $i$. Consequently, $\beta'(G)=s$.

Next, we demonstrate the necessity part.  Note that there at least one $M^*$-unsaturated vertex on the cycle of $G$, which is denoted by $x$. Suppose $G-x=F_1\cup F_2\cup\cdots\cup F_l\cup I_1~(l\geq1)$, where $I_1$ induces a null graph and $F_i~(1\leq i\leq l)$ is a connected component with order at least $2$. By Lemma \ref{components}, $G-x$  has only one component (denoted by $F_1$) with $m_\lambda(F_1)=\beta'(F_1)-1$ and $l\leq2$.
According to Theorem \ref{extremal-tree}, $F_1$ can take on two different forms.

{\flushleft  \bf Case 1.}  $F_1$ has the form $(b)$ of Theorem \ref{extremal-tree}.

In this case, $m_\lambda(F_1)\geq2$. As $G$ is an unicyclic graph, there is only one component that has two vertices adjacent to $x$. Let $F_1-u=T_1\cup T_2\cup\cdots\cup T_{p+1}\cup I_2~(p\geq2)$, where $I_2$ induces a null graph and $T_i~(1\leq i\leq p+1)$ is a tree with order at least $2$.

If $l=1$ and the cycle of $G$ contains the vertex $u$, then since $x$ is adjacent to at most two components of $F_1-u$, there exists at least one component of $F_1-u$ that does not contain any vertices on the cycle of $G$, say $T_{p+1}$. If $x$ is adjacent to  one  component of $\{T_i|1\leq i\leq p\}$, without loss of generality, we set $T^*=T_1+x+I_1$. If $x$ is adjacent to two components of $\{T_i|1\leq i\leq p\}$,  we set $T^*=T_1+T_2+x+I_1$. Since $F_1$ has the form $(b)$ of Theorem \ref{extremal-tree} and $x$ is an $M^*$-unsaturated vertex, $\beta'(T^*)+\sum_{i=j}^{p+1}\beta'(T_i)=\beta'(G)($j=2$~\text{or}~3)$. Using Lemma \ref{GuvH} again, we have
\begin{equation*}
\begin{array}{lll}
m_\lambda(G)&=&m_\lambda(G-T_{p+1}-u )+m_\lambda(T_{p+1})-1\\
&=&m_\lambda(T^*)+\sum_{i=j}^pm_\lambda(T_i) \quad ($j=2$~\text{or}~3)\\
&\leq&\beta'(T^*)+\sum_{i=j}^p\beta'(T_i)\\
&=&\beta'(G)-1,
\end{array}
\end{equation*}
which leads to a contradiction. Thus $u$ is not on the cycle of $G$, which implies that $x$ is adjacent to  one  component of $\{T_i|1\leq i\leq p+1\}$, say $T_1$, and let the connected component of $G-u$ containing $T_1+x$ be $U_1^*$. Since $F_1$ has the form $(b)$ of Theorem \ref{extremal-tree} and $x$ is an $M^*$-unsaturated vertex, $\beta'(U_1^*)+\sum_{i=2}^{p+1}\beta'(T_i)=\beta'(G)$. According to Lemma \ref{GuvH}, we have
\begin{equation*}
\begin{array}{lll}
\beta'(G)=m_\lambda(G)&=&m_\lambda(G-T_{p+1}-u)+m_\lambda(T_{p+1})-1\\
&=&m_\lambda(U_1^*)+\sum_{i=2}^{p}m_\lambda(T_i)\\
&\leq&\beta'(U_1^*)+1+\sum_{i=2}^{p}\beta'(T_i)\\
&=&\beta'(G).
\end{array}
\end{equation*}
Therefore, the equality hold if and only if $m_\lambda(U_1^*)=\beta'(U_1^*)+1$ and $\beta'(G)=\beta'(U_1^*)+\sum_{i=2}^{p}\beta'(T_i)+1$. By Theorem \ref{thm-upper-bound},  $U_1^*$ is isomorphic to $C_3(a, a, a)$ or $C_5$. If $U_1^*\cong C_5$, then each $T_i~(2\leq i\leq p+1)$ is a tree with diameter $3$ and $\beta'(G)=\beta'(U_1^*)+\sum_{i=2}^{p}\beta'(T_i)+2$, which is a contradiction. Hence, $U_1^*\cong C_3(a,a,a)$, $\beta'(G)=s=p+1$, and $w=u$, as required.

If $l=2$, then $x$ is adjacent to two vertices of $F_2$ by Lemma \ref{components}. Let the connected component of $G-u$ containing $x+F_2$ be $F_2^*$ and $F^*=G-V(F_2^*)-V(T_{p+1})-u$. Since $F_1$ has the form $(b)$ of Theorem 1.2 and $F_2$ is a tree with diameter at most $3$, $\beta'(G)=\beta'(F_2^*)+\beta'(F^*)+\beta'(T_{p+1})$. Using Lemma \ref{GuvH} again, we have
\begin{equation*}
\begin{array}{lll}
m_\lambda(G)&=&m_\lambda(G-T_{p+1}-u)+m_\lambda(T_{p+1})-1\\
&=&m_\lambda(F_2^*)+m_\lambda(F^*)\\
&\leq&\beta'(F_2^*)+1+\beta'(F^*)\\
&=&\beta'(G),
\end{array}
\end{equation*}
the equality hold if and only if $m_\lambda(F_2^*)=\beta'(F_2^*)+1$ and $\beta'(G)=\beta'(F_2^*)+\beta'(F^*)+1$. By Theorem \ref{thm-upper-bound},  $F_2^*$ is isomorphic to $C_3(a, a, a)$ or $C_5$. If $F_2^* \cong C_5$, then each $T_i~(1\leq i\leq p+1)$ is a tree with diameter $3$ and $\beta'(G)=\beta'(F_2^*)+\beta'(F^*)+2$, which is a contradiction. Hence, $F_2^*\cong C_3(a,a,a)$, $F^*\cong\bigcup_{i=1}^{p}T_i$, and $w=u$, as required.

{\flushleft  \bf Case 2.}  $F_1$  has form $(a)$ of Theorem \ref{extremal-tree}.

In this case, $\beta'(F_1)=2$. Since $\beta'(G)\geq3$, we have $l=2$. Suppose  $F_2'=F_2+x+I_1$. Let $z$ be the only vertex in $F_1$ adjacent to $x$, and $G-z=F'_1\cup \cdots\cup F'_k\cup I_3$, where $I_3$ induces a null graph and $F'_i~(1\leq i\leq k)$ is a connected component with order at least $2$. Note that $F_2$ is a tree with a diameter of at most $3$. Therefore, if $F_2\ncong P_2$ or $\beta'(G)=\beta'(G-z)$, then we can find another induced matching $M^{*'}$ such that there is an unsaturated vertex $y$ in the cycle of $F_2'$ and $G-y$ has non-trivial component, say $K_1$, with $\beta'(K_1)\geq3$. Therefore, $K_1$ is of the form $(b)$ mentioned in Theorem \ref{extremal-tree}. We can proceed similar as to Case $1$ and obtain the desired result. Thus, we only need to consider the case $F_2\cong P_2$ and $\beta'(G)=\beta'(G-z)+1$. Since $\beta'(G)=\beta'(G-z)+1$ and $x$ is an $M^*$-unsaturated vertex, $\beta'(F_2)=\beta'(F'_2)$ and $\sum_{i=1}^k\beta'(F_i')=\beta'(G)-1$. If $m_\lambda(F_2')=m_\lambda(F_2)+1$, by applying Lemma \ref{GuvH}, we have
\begin{equation*}
\begin{array}{lll}
m_\lambda(G)&=&m_\lambda(G-F_2'-z)+m_\lambda(F_2')-1\\
&\leq&\sum_{i=1, i\neq2}^k\beta'(F_i')+\beta'(F_2')+1-1\\
&=&\beta'(G)-1,
\end{array}
\end{equation*}
a contradiction. Next, we only consider the case $m_\lambda(F_2')\leq m_\lambda(F_2)=\beta'(F_2)=\beta'(F_2')$. According to Lemma \ref{interlacing}, we have
\begin{equation*}
\begin{array}{lll}
m_\lambda(G)&\leq&m_\lambda(G-z)+1\\
&=&\sum_{i=1, i\neq2}^km_\lambda(F_i')+m_\lambda(F_2')+1\\
&\leq&\sum_{i=1, i\neq2}^k\beta'(F_i')+\beta'(F'_2)+1\\
&=&\beta'(G).
\end{array}
\end{equation*}
Thus, $m_\lambda(G)=\beta'(G)$ if and only if each inequality involved to become an equality. That is, $m_\lambda(G)=m_\lambda(G-z)+1$, each components of $G-z$ with $m_\lambda(F_i')=\beta(F'_i)~(1\leq i\leq k)$, and $\beta'(G)=\beta'(G-z)+1$. Since $k\geq2$, there at least one components of $G-z$, say $F'_k~(k\neq2)$, such that $m_\lambda(F'_k)=\beta'(F'_k)$ and has only one vertex adjacent to $z$. By Lemma \ref{GuvH}, we have
\begin{equation}\label{equ-3-5-1}
\begin{array}{lll}
m_\lambda(G)&=&m_\lambda(G-z-F_k')+m_\lambda(F_k')-1\\
&=&\sum_{i=1}^{k}m_\lambda(F_k')-1\\
&=&\beta'(G)-2,
\end{array}
\end{equation}
a contradiction.
\end{proof}
\begin{thm}\label{extremal-graph}
Let $G$ be a connected  graph with a diameter of at least $4$ and $\lambda$ as a nonzero eigenvalue. If $\beta'(G)\geq3$, then $m_\lambda(G)=\beta'(G)+c(G)-1$ if and only if there exists a vertex $w$ such that $G-w=H_1 \cup \ldots \cup H_{c(G)} \ldots\cup H_s\cup I$, where $s=\beta'(G) \geq 3$, $I$ induces a null graph (without edges), each $H_i~(1\leq i\leq c(G))$ is isomorphic to $C_3(a,a,a)$ with $m_\lambda(H_i)=2$, and $H_j~(c(G)+1\leq j\leq s)$ is a tree with diameter of at most $3$ and $m_\lambda(H_j)=1$. Furthermore, either $diam(H_i+w)=2$ for some $i$, or $I=\emptyset$ and $diam(H_i+w)=3$ for all $i~(1\leq i\leq s)$.
\end{thm}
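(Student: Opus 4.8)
\emph{Strategy.} The easy half is sufficiency, which I would prove by a direct ``peeling'' computation based on Lemma \ref{GuvH}; the substance is necessity, which I would prove by induction on the cyclomatic number $c(G)$, using Lemma \ref{components} to strip off one cycle at a time and the equality case of Theorem \ref{thm-upper-bound} to identify the pieces that appear. The base cases are $c(G)=0$, which is Theorem \ref{extremal-tree} (the hypothesis $\beta'(G)\ge 3$ discards form (a), since every caterpillar of diameter $4,5,6$ has $\beta'\le 2$, leaving form (b) — precisely the claimed shape with no cycle component), and $c(G)=1$, which is Lemma \ref{unciyclic-hold}.

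\emph{Sufficiency.} Suppose $G$ has the stated shape. Pick any component $H_t$ of $G-w$ and a vertex $u_t\in V(H_t)$ with $u_t\sim w$. If $H_t$ is a tree of diameter at most $3$ then $m_\lambda(H_t)=1$ and $\lambda\notin\sigma(H_t-u_t)$ by Lemma \ref{star}; if $H_t\cong C_3(a,a,a)$ then $m_\lambda(H_t)=2$ and $m_\lambda(H_t-u_t)=1$ by Lemma \ref{C3}. In either case $m_\lambda(H_t)=m_\lambda(H_t-u_t)+1$, so Lemma \ref{GuvH} (with ``$G$''$=H_t$, ``$u$''$=u_t$, ``$H$''$=G-V(H_t)$, ``$v$''$=w$) gives
\begin{equation*}
m_\lambda(G)=m_\lambda\bigl((G-V(H_t))-w\bigr)+m_\lambda(H_t)-1=\sum_{i\ne t}m_\lambda(H_i)+m_\lambda(H_t)-1,
\end{equation*}
because deleting $w$ separates the remaining components. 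Plugging in $m_\lambda=2$ on the $c(G)$ cycle components and $m_\lambda=1$ on the $s-c(G)$ tree components yields $m_\lambda(G)=2c(G)+(s-c(G))-1=s+c(G)-1$. Exactly as in the proofs of Lemma \ref{unciyclic-hold} and Theorem \ref{extremal-tree}, the alternative ``$\operatorname{diam}(H_i+w)=2$ for some $i$, or $I=\emptyset$ and $\operatorname{diam}(H_i+w)=3$ for all $i$'' forces $\beta'(G)=s$, and one checks $\operatorname{diam}(G)\ge 4$ once $\beta'(G)=s\ge 3$; hence $m_\lambda(G)=\beta'(G)+c(G)-1$.

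\emph{Necessity, inductive step ($c(G)\ge 2$).} Fix an $M^*$-unsaturated vertex $x$ on a cycle. By Lemma \ref{components}, $c(G-x)=c(G)-1$, $m_\lambda(G-x)=\beta'(G-x)+c(G-x)-1$, and $G-x$ has at most two non-trivial components. If $G-x=H_1\cup I$, a short count (adding a vertex with $t$ edges spread over $p$ components changes $c$ by $t-p$) using $c(G-x)=c(G)-1$ shows $x$ has exactly two neighbours in $H_1$; moreover $c(H_1)=c(G)-1\ge 1$, $m_\lambda(H_1)=\beta'(H_1)+c(H_1)-1$ and $\beta'(H_1)=\beta'(G-x)\ge\beta'(G)\ge 3$, so (after checking $\operatorname{diam}(H_1)\ge 4$) the induction hypothesis gives a vertex $w$ realizing the structure of $H_1$. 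If instead $G-x=H_1\cup H_2\cup I$, Lemma \ref{components} gives $m_\lambda(H_2)=\beta'(H_2)+c(H_2)$ with $x$ adjacent to two vertices of $H_2$, hence $H_2\cong C_3(a,a,a)$, $C_5$ or a tree of diameter $\le 3$ by Theorem \ref{thm-upper-bound}, while the same count forces $x$ to have exactly one neighbour in $H_1$. If $H_2\cong C_3(a,a,a)$ or $C_5$, deleting that neighbour of $x$ in $H_1$ isolates the component $H_2+x+\{\text{pendants of }x\}$, whose multiplicity at $\lambda$ is at most $2$ by Lemma \ref{C5}; comparing with $m_\lambda(G)=\beta'(G)+c(G)-1$ through Lemma \ref{interlacing} yields a contradiction (the case $H_2\cong C_5$ also needs a short eigenvalue computation in the spirit of the $C^*$-analysis in Theorem \ref{thm-upper-bound}). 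Thus $H_2$ is a tree of diameter $\le 3$ with $m_\lambda(H_2)=1$, $c(H_1)=c(G)-1$, and one again appeals to the induction hypothesis on $H_1$. In both cases one then checks that the vertex $w$ produced for $H_1$ also works for $G$: re-attaching $x$ either completes a double-star component of $H_1-w$ into a $C_3(a,a,a)$ (one-component case: $x$ is adjacent to the two non-central vertices of the double star, so its remaining single edge into $H_1$ must land on $w$), or it glues the new petal $H_2+x$ to $H_1$ along the single edge from $x$, which for the same reason must meet $w$; either way $\operatorname{diam}(H_i+w)$ is unchanged on the modified or new petal, so the final alternative is inherited from $H_1$, and $G-w$ has exactly $c(G)$ components isomorphic to some $C_3(a,a,a)$ with $m_\lambda=2$ and $\beta'(G)-c(G)$ tree components of diameter $\le 3$ with $m_\lambda=1$.

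\emph{The main obstacle.} Essentially all the work sits in the bookkeeping of the last step: proving that the distinguished neighbour of $x$ in $H_1$ is forced to be the centre $w$ and not some other vertex of a petal of $H_1$, and verifying the standing hypotheses $\operatorname{diam}(H_1)\ge 4$ and $\beta'(H_1)\ge 3$ needed to invoke the induction. The most delicate point is that in the two-component case one only gets $\beta'(H_1)=\beta'(G-x)-1\ge 2$, so when $\beta'(G)=3$ it can happen that $\beta'(H_1)=2$, which is outside both the induction hypothesis and Lemma \ref{unciyclic-hold}; handling this borderline configuration requires either a dedicated choice of $M^*$ and $x$ keeping us in the one-component case, or the $\beta'=2$ analogue of the structural lemmas (connected graphs with $\beta'=2$, $c\ge 1$, diameter $\ge 4$ and $m_\lambda=c+1$). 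Finally one records, for the $c(G)=0$ base case, that form (b) of Theorem \ref{extremal-tree} has $s=\beta'(G)$ components, each a tree of diameter $\le 3$ with $m_\lambda=1$, which matches the statement.
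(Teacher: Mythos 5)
Your overall architecture — sufficiency by peeling with Lemma \ref{GuvH}, necessity by induction on $c(G)$ driven by Lemma \ref{components} and the equality case of Theorem \ref{thm-upper-bound} — is exactly the paper's, and your sufficiency computation and the one-/two-component case split match its proof in outline. However, the two places you defer are precisely where the paper's work lies, so as written there are genuine gaps. The borderline you flag in the two-component case is real and not mere bookkeeping: when $G-x=H_1\cup H_2\cup I$ you only control $m_\lambda(H_1)=\beta'(H_1)+c(H_1)-1$ with possibly $\beta'(H_1)=2$, and then neither the induction hypothesis nor Lemma \ref{unciyclic-hold} applies. The paper's resolution is the first of your two suggested repairs: unless $H'_2\cong P_2$ and $\beta'(G)=\beta'(G-z)+1$, it re-chooses the induced matching and picks a different unsaturated vertex $y$ on the same cycle so that $G-y$ has a single non-trivial component with $\beta'\geq 3$, returning to the one-component case; the residual $P_2$ borderline is then eliminated by two direct Lemma \ref{GuvH} computations (split on whether $m_\lambda(H_2^*)=m_\lambda(H'_2)+1$), each forcing $m_\lambda(G)\leq\beta'(G)+c(G)-2$ or $-3$. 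Your alternative treatment of the subcase $H_2\cong C_3(a,a,a)$ or $C_5$ via Lemma \ref{C5} is a plausible shortcut the paper does not use, but it does not remove the $\beta'(H_1)=2$ obstruction.

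In the one-component case your parenthetical about the ``remaining single edge into $H_1$'' landing on $w$ is incoherent: you have just argued $x$ has exactly two neighbours in $H_1$, both needed to close the new cycle, so there is no remaining edge. The actual issue is to show these two neighbours lie in a \emph{single} petal $K_1$ of $H_1-w$ rather than straddling two petals or hitting $w$; the paper does this by assuming $w$ lies on the cycle through $x$ and running Lemma \ref{GuvH} against a petal $K_s$ disjoint from that cycle to force $m_\lambda(G)\leq\beta'(G)+c(G)-2$, and only then applies the equality case of Theorem \ref{thm-upper-bound} to identify $K_1+x+I_1$ as $C_3(a,a,a)$ (excluding $C_5$ by an induced-matching count). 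Finally, note that $\beta'\geq 3$ does not by itself give diameter at least $4$ (three independent edges plus a vertex joined to all six endpoints has $\beta'=3$ and diameter $2$), so the hypothesis checks you postpone before invoking the induction are not automatic; the paper is itself terse on this point, but your proof should not rely on it silently.
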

\begin{proof}
we demonstrate the sufficiency part by showing that $m_\lambda(G)=c(G)+\beta'(G)-1$. Note that $s\geq3$ and $m_\lambda(H_i)=m_\lambda(H_i-u_i)+1~(1\leq i\leq s)$ by lemmas \ref{star} and \ref{C3}, where $u_i$ is a vertex in $H_i$ that is adjacent to $w$. According to Lemma \ref{GuvH} and Theorem \ref{thm-upper-bound}, we obtain
\begin{equation*}
m_\lambda(G)=m_\lambda(G-H_s-w)+m_\lambda(H_s)-1=\sum_{i=1}^{s} m_\lambda(H_i)-1=\beta'(G)+c(G)-1.
\end{equation*}

Next, we demonstrate the necessity part by induction on $c(G)$.  If $c(G)=1$,  Lemma \ref{unciyclic-hold} implies the result, and so we proceed to the case where $c(G)\geq2$.  Note that there at least one $M^*$-unsaturated vertex on any cycle of $G$, and we choose one (called $x$) and set $G-x=H'_1\cup\cdots\cup H'_l\cup I_1~(l\leq2)$ by Lemma \ref{components}. Moreover, $G-x$ must have only one component (which we denote as $H'_1$) with $m_\lambda(H'_1)=\beta'(H'_1)+c(H'_1)-1$. If $l=2$, then $m_\lambda(H'_2)=\beta'(H'_2)+c(H'_2)$ and $H'_2$ has two vertices adjacent to $x$.

{\flushleft  \bf Case 1.} $l=1$.

In this case, $\beta'(G)=\beta'(H'_1)=s\geq3$. By induction hypothesis, suppose $H'_1-u=K_1 \cup \ldots \cup K_s\cup I_2~(s\geq3)$. Next, we assert that the cycle of $G$ does not include the vertex $u$. If it did, then $x$ would be adjacent to at most two components of $H'_1-u$. Thus, at least one component of $H'_1-u$, denoted as $K_s$, would not contain any vertices on the cycle containing the vertex $u$ of $G$. If $u$ is adjacent to one  component of $\{K_i|1\leq i\leq s\}$, without loss of generality, we set $K^*=K_1+x+I_1$. If $x$ is adjacent to two components of $\{K_i|1\leq i\leq s\}$, we set $K^*=K_1+K_2+x+I_1$. Since $H'_1$ satisfied the Theorem by induction hypothesis and $x$ is an $M^*$-unsaturated vertex, $\beta'(G)=\beta'(K^*)+\sum_{i=j}^{s}\beta'(K_i)($j=2$~\text{or}~3)$. Note that $m_\lambda(K_s)=m_\lambda(K_s-u_s)+1$ and $c(G-K_s-u)\leq c(G)-c(K_s)-1$. By using Lemma \ref{GuvH} again, we have
\begin{equation*}
\begin{array}{lll}
m_\lambda(G)&=&m_\lambda(G-K_{s}-u)+m_\lambda(K_s)-1\\
&=&m_\lambda(K^*)+\sum_{i=j}^{s-1}m_\lambda(K_i)+m_\lambda(K_s)-1 \quad ($j=2$~\text{or}~3)\\
&\leq&\beta'(K^*)+c(K^*)+\sum_{i=j}^{s-1}(\beta'(K_i)+c(K_i))+\beta'(K_s)+c(K_s)-1\\
&\leq&\beta'(G)+c(G)-2,
\end{array}
\end{equation*}
which is a contradiction. Therefore, $u$ is not on the cycle of $G$, which implies that $x$ is adjacent to  only one  component of $\{K_i|1\leq i\leq s\}$, namely $K_1$. Let $K_1^*=K_1+x+I_1$. According to Lemma \ref{GuvH}, we have
\begin{equation*}
\begin{array}{lll}
m_\lambda(G)&=&m_\lambda(G-K_s-u)+m_\lambda(K_s)-1\\
&=&m_\lambda(K_1^*)+\sum_{i=2}^{s-1}m_\lambda(K_i)+m_\lambda(K_s)-1\\
&\leq&\beta'(K_1^*)+c(K_1^*)+\sum_{i=2}^{s-1}(\beta'(K_i)+c(K_i))+\beta'(K_s)+c(K_s)-1\\
&\leq&\beta'(G)+c(G)-1.
\end{array}
\end{equation*}
Therefore, $m_\lambda(G)=\beta'(G)+c(G)-1$ if and only if  $m_\lambda(K_1^*)=\beta'(K_1^*)+c(K_1^*)$, $m_\lambda(K_i)=\beta'(K_i)+c(K_i)~(2\leq i\leq s)$, and $\beta'(G)=\beta'(K_1^*)+\sum_{i=2}^{s}\beta'(K_i)$. By Theorem \ref{thm-upper-bound},  $K_1^*$ is isomorphic to $C_3(a, a, a)$ or $C_5$. If $K_1^*\cong C_5$, then each $K_i~(2\leq i\leq s)$ must be either a tree with diameter of $3$ or $C_3(1,1,1)$. Consequently, we can conclude that $\beta'(G)=\beta'(K_1^*)+\sum_{i=2}^{s}\beta'(K_i)+1$, which leads to a contradiction. Therefore, $K_1^*\cong C_3(a,a,a)$, $\beta'(G)=s$, as required.

{\flushleft  \bf Case 2.} $l=2$.

In this case, Suppose $H_2^*=H'_2+x+I_1$. Let $z$ be the only vertex in $H'_1$ adjacent to $x$, and $G-z=H_1^*\cup H_2^*\cup \cdots\cup H_k^*\cup I_3$, where $k\geq2$ for $\beta'(H'_1)\geq2$. Note that $H'_2$ is a tree with a diameter of at most $3$ or $C_5$, or $C_3(a,a,a)$. Therefore, if $H'_2\ncong P_2$ or $\beta'(G)=\beta'(G-z)$, then we can find another induced matching $M^{*'}$ such that there is an unsaturated vertex $y\in V(H'_2)$ in the cycle of $H_2^*$ and $G-y$ has only one non-trivial component, say $K_1$, with $\beta'(K_1)\geq3$. Hence, by induction hypothesis, we can proceed similar as to Case $1$ and obtain the desired result. Next, we only consider the case $H'_2\cong P_2$ and $\beta'(G)=\beta'(G-z)+1$. If $m_\lambda(H_2^*)=m_\lambda(H'_2)+1$, then by applying Lemma \ref{GuvH}, we have
\begin{equation*}
\begin{array}{lll}
m_\lambda(G)&=&m_\lambda(G-H_2^*-z)+m_\lambda(H_2^*)-1\\
&\leq&\sum_{i=1, i\neq2}^k(\beta'(H_i^*)+c(H_i^*))+\beta'(H_2^*)+c(H_2^*)-1\\
&\leq&\beta'(G)+c(G)-2,
\end{array}
\end{equation*}
a contradiction. Next, we only consider the case where $m_\lambda(H_2^*)\leq m_\lambda(H'_2)=\beta'(H'_2)+c(H'_2)=\beta'(H_2^*)+c(H_2^*)-1$.
 Using Lemma \ref{interlacing}, we have
\begin{equation*}
\begin{array}{lll}
m_\lambda(G)&\leq&m_\lambda(G-z)+1\\
&=&\sum_{i=1, i\neq2}^km_\lambda(H_i^*)+m_\lambda(H_2^*)+1\\
&\leq&\sum_{i=1, i\neq2}^k(\beta'(H_i^*)+c(H_i^*))+\beta'(H_2^*)+c(H_2^*)\\
&\leq&\beta'(G)+c(G)-1.
\end{array}
\end{equation*}
Thus, $m_\lambda(G)=\beta'(G)+c(G)-1$ if and only if each inequality involved to become an equality. That is, $m_\lambda(G)=m_\lambda(G-z)+1$, $m_\lambda(H_i^*)=\beta(H_i^*)+c(H_i^*)~(1\leq i\neq2\leq k)$ and $m_\lambda(H_2^*)=\beta'(H_2^*)+c(H_2^*)-1$. Besides, $\beta'(G)=\beta'(G-z)+1$ and $c(G)=c(G-z)$. Since $k\geq2$, there at least one components of $G-z$, say $H_k^*~(k\neq2)$, such that $m_\lambda(H_k^*)=\beta'(H_k^*)+c(H_k^*)$ and has only vertex adjacent to $z$. By Lemma \ref{GuvH}, we have
\begin{equation}\label{thm-3-2-1}
\begin{array}{lll}
m_\lambda(G)&=&m_\lambda(G-z-H_k^*)+m_\lambda(H_k^*)-1\\
&=&\sum_{i=1}^{k}m_\lambda(H_i^*)-1\\
&=&\beta'(G)+c(G)-3,
\end{array}
\end{equation}
which is a contradiction.
\end{proof}

\begin{figure}[htbp]
\centering
\begin{tikzpicture}[x=1.00mm, y=1.00mm, inner xsep=0pt, inner ysep=0pt, outer xsep=0pt, outer ysep=0pt,scale=0.7]
\path[line width=0mm] (74.91,45.30) rectangle +(108.24,89.54);
\definecolor{L}{rgb}{0,0,0}
\definecolor{F}{rgb}{0,0,0}
\path[line width=0.30mm, draw=L, fill=F] (126.80,80.26) circle (1.00mm);
\path[line width=0.30mm, draw=L, fill=F] (80.26,60.52) circle (1.00mm);
\path[line width=0.30mm, draw=L, fill=F] (80.03,49.94) circle (1.00mm);
\path[line width=0.30mm, draw=L, fill=F] (89.43,54.88) circle (1.00mm);
\path[line width=0.30mm, draw=L, fill=F] (100.48,55.11) circle (1.00mm);
\path[line width=0.30mm, draw=L, fill=F] (110.35,60.52) circle (1.00mm);
\path[line width=0.30mm, draw=L, fill=F] (110.11,49.24) circle (1.00mm);
\path[line width=0.30mm, draw=L] (89.66,55.11) -- (80.97,60.52);
\path[line width=0.30mm, draw=L] (89.66,54.88) -- (80.26,49.71);
\path[line width=0.30mm, draw=L] (89.19,54.88) -- (101.18,55.11);
\path[line width=0.60mm, draw=L] (100.24,55.11) -- (110.11,60.52);
\path[line width=0.30mm, draw=L] (100.24,55.11) -- (109.88,49.24);
\path[line width=0.30mm, draw=L, fill=F] (140.43,59.58) circle (1.00mm);
\path[line width=0.30mm, draw=L, fill=F] (140.20,49.00) circle (1.00mm);
\path[line width=0.30mm, draw=L, fill=F] (149.60,53.94) circle (1.00mm);
\path[line width=0.30mm, draw=L, fill=F] (160.64,54.17) circle (1.00mm);
\path[line width=0.30mm, draw=L, fill=F] (170.51,59.58) circle (1.00mm);
\path[line width=0.30mm, draw=L, fill=F] (170.28,48.30) circle (1.00mm);
\path[line width=0.30mm, draw=L] (149.83,54.17) -- (141.14,59.58);
\path[line width=0.30mm, draw=L] (149.83,53.94) -- (140.43,48.77);
\path[line width=0.30mm, draw=L] (149.36,53.94) -- (161.35,54.17);
\path[line width=0.60mm, draw=L] (160.41,54.17) -- (170.28,59.58);
\path[line width=0.30mm, draw=L] (160.41,54.17) -- (170.04,48.30);
\path[line width=0.30mm, draw=L, fill=F] (150.07,115.52) circle (1.00mm);
\path[line width=0.30mm, draw=L, fill=F] (149.83,104.94) circle (1.00mm);
\path[line width=0.30mm, draw=L, fill=F] (159.23,109.88) circle (1.00mm);
\path[line width=0.30mm, draw=L, fill=F] (170.28,110.11) circle (1.00mm);
\path[line width=0.30mm, draw=L, fill=F] (180.15,115.52) circle (1.00mm);
\path[line width=0.30mm, draw=L, fill=F] (179.92,104.24) circle (1.00mm);
\path[line width=0.30mm, draw=L] (159.47,110.11) -- (150.77,115.52);
\path[line width=0.30mm, draw=L] (159.47,109.88) -- (150.07,104.71);
\path[line width=0.30mm, draw=L] (159.00,109.88) -- (170.98,110.11);
\path[line width=0.60mm, draw=L] (170.04,110.11) -- (179.92,115.52);
\path[line width=0.30mm, draw=L] (170.04,110.11) -- (179.68,104.24);
\path[line width=0.30mm, draw=L] (126.56,80.26) -- (89.19,54.64);
\path[line width=0.30mm, draw=L] (127.03,80.73) -- (150.30,53.94);
\path[line width=0.30mm, draw=L, fill=F] (165.34,120.02) circle (1.00mm);
\path[line width=0.30mm, draw=L] (165.11,119.55) -- (158.76,109.21);
\path[line width=0.30mm, draw=L] (165.11,120.72) -- (169.57,110.38);
\path[line width=0.30mm, draw=L, fill=F] (160.41,129.73) circle (1.00mm);
\path[line width=0.30mm, draw=L, fill=F] (170.28,129.96) circle (1.00mm);
\path[line width=0.30mm, draw=L] (159.94,129.96) -- (165.81,119.85);
\path[line width=0.30mm, draw=L] (170.28,130.90) -- (165.81,120.09);
\path[line width=0.30mm, draw=L, fill=F] (148.89,83.08) circle (1.00mm);
\path[line width=0.30mm, draw=L, fill=F] (148.66,72.51) circle (1.00mm);
\path[line width=0.30mm, draw=L, fill=F] (158.06,77.44) circle (1.00mm);
\path[line width=0.30mm, draw=L, fill=F] (169.10,77.68) circle (1.00mm);
\path[line width=0.30mm, draw=L, fill=F] (178.98,83.08) circle (1.00mm);
\path[line width=0.30mm, draw=L, fill=F] (178.74,71.80) circle (1.00mm);
\path[line width=0.30mm, draw=L] (158.29,77.68) -- (149.60,83.08);
\path[line width=0.30mm, draw=L] (158.29,77.44) -- (148.89,72.27);
\path[line width=0.30mm, draw=L] (157.82,77.44) -- (169.81,77.68);
\path[line width=0.60mm, draw=L] (168.87,77.68) -- (178.74,83.08);
\path[line width=0.30mm, draw=L] (168.87,77.68) -- (178.51,71.80);
\path[line width=0.30mm, draw=L, fill=F] (164.17,87.58) circle (1.00mm);
\path[line width=0.30mm, draw=L] (163.93,87.11) -- (157.59,76.77);
\path[line width=0.30mm, draw=L] (163.93,88.29) -- (168.40,77.95);
\path[line width=0.30mm, draw=L, fill=F] (159.23,97.29) circle (1.00mm);
\path[line width=0.30mm, draw=L, fill=F] (169.10,97.53) circle (1.00mm);
\path[line width=0.30mm, draw=L] (158.76,97.53) -- (164.64,87.42);
\path[line width=0.30mm, draw=L] (169.10,98.47) -- (164.64,87.65);
\path[line width=0.30mm, draw=L] (159.47,109.75) -- (127.03,80.60);
\path[line width=0.30mm, draw=L] (158.06,77.31) -- (126.33,80.13);
\path[line width=0.30mm, draw=L, fill=F] (90.60,101.05) circle (1.00mm);
\path[line width=0.30mm, draw=L, fill=F] (79.56,99.88) circle (1.00mm);
\path[line width=0.30mm, draw=L, fill=F] (79.79,95.18) circle (1.00mm);
\path[line width=0.30mm, draw=L, fill=F] (79.56,109.75) circle (1.00mm);
\path[line width=0.30mm, draw=L, fill=F] (79.32,104.81) circle (1.00mm);
\path[line width=0.60mm, draw=L] (79.56,109.51) -- (90.84,101.05);
\path[line width=0.30mm, draw=L] (79.32,104.81) -- (91.31,100.82);
\path[line width=0.30mm, draw=L] (79.56,99.88) -- (91.31,101.29);
\path[line width=0.30mm, draw=L] (79.79,95.41) -- (90.60,101.05);
\path[line width=0.30mm, draw=L] (90.84,101.52) -- (127.74,80.13);
\path[line width=0.30mm, draw=L, fill=F] (89.19,78.02) circle (1.00mm);
\path[line width=0.30mm, draw=L, fill=F] (78.15,76.84) circle (1.00mm);
\path[line width=0.30mm, draw=L, fill=F] (78.38,72.14) circle (1.00mm);
\path[line width=0.30mm, draw=L, fill=F] (78.15,86.71) circle (1.00mm);
\path[line width=0.30mm, draw=L, fill=F] (77.91,81.78) circle (1.00mm);
\path[line width=0.60mm, draw=L] (78.15,86.48) -- (89.43,78.02);
\path[line width=0.30mm, draw=L] (77.91,81.78) -- (89.90,77.78);
\path[line width=0.30mm, draw=L] (78.15,76.84) -- (89.90,78.25);
\path[line width=0.30mm, draw=L] (78.38,72.38) -- (89.19,78.02);
\path[line width=0.30mm, draw=L, fill=F] (91.31,123.14) circle (1.00mm);
\path[line width=0.30mm, draw=L, fill=F] (80.26,121.97) circle (1.00mm);
\path[line width=0.30mm, draw=L, fill=F] (80.50,117.27) circle (1.00mm);
\path[line width=0.30mm, draw=L, fill=F] (80.26,131.84) circle (1.00mm);
\path[line width=0.30mm, draw=L, fill=F] (80.03,126.90) circle (1.00mm);
\path[line width=0.60mm, draw=L] (80.26,131.61) -- (91.54,123.14);
\path[line width=0.30mm, draw=L] (80.03,126.90) -- (92.01,122.91);
\path[line width=0.30mm, draw=L] (80.26,121.97) -- (92.01,123.38);
\path[line width=0.30mm, draw=L] (80.50,117.50) -- (91.31,123.14);
\path[line width=0.30mm, draw=L, fill=F] (119.98,65.09) circle (1.00mm);
\path[line width=0.30mm, draw=L, fill=F] (129.15,64.86) circle (1.00mm);
\path[line width=0.30mm, draw=L, fill=F] (118.10,95.88) circle (1.00mm);
\path[line width=0.30mm, draw=L, fill=F] (126.56,95.88) circle (1.00mm);
\path[line width=0.30mm, draw=L, fill=F] (134.32,95.88) circle (1.00mm);
\path[line width=0.30mm, draw=L] (118.34,96.12) -- (126.80,80.37);
\path[line width=0.30mm, draw=L] (126.56,95.88) -- (126.56,80.37);
\path[line width=0.30mm, draw=L] (134.55,96.12) -- (126.80,79.66);
\path[line width=0.30mm, draw=L] (126.80,80.13) -- (119.51,65.33);
\path[line width=0.30mm, draw=L] (126.80,80.13) -- (129.38,65.09);
\path[line width=0.30mm, draw=L] (91.07,123.38) -- (127.50,79.43);
\path[line width=0.30mm, draw=L] (89.43,77.78) -- (127.03,79.66);
\draw(117,81) node[anchor=base west]{\fontsize{14.23}{17.07}\selectfont $w$};
\end{tikzpicture}%
\caption{A graph with $-2$ as an eigenvalue of multiplicity $\beta'(G)+c(G)-1$.}\label{fig-4}
\end{figure}
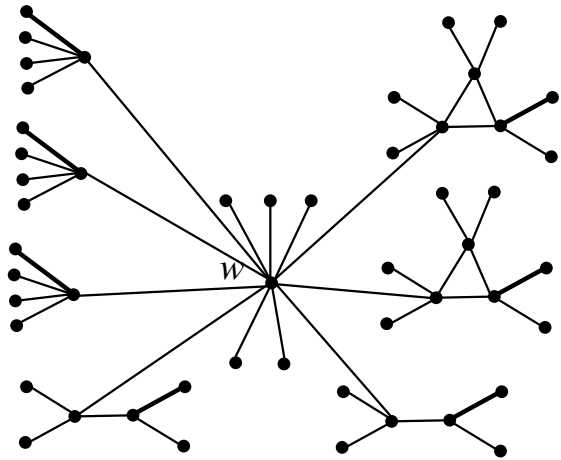

See Fig. $\ref{fig-4} $ for an example.  In this graph,  vertex $w$ has a  degree of $12$, and $G-w$ consists of $7$ non-trivial components. These components include three copies of $K_{1,4}$, two copies of $Y_6$ , and two copies of $C_3(2,2,2)$. It is worth noting that both $K_{1,4}$ and $Y_6$ have a simple eigenvalue of $-2$, and $m_{-2}(C_3(2,2,2))=2$. Therefore, according to Lemma \ref{GuvH}, we have $m_{-2}(G)=8=\beta'(G)+c(G)-1$.

Finally,  we investigate  the relationship between the matching number and $m_\lambda(G)$.
\begin{lem}
Let $G$ be a connected graph with $\lambda \neq 0$ as an eigenvalue of multiplicity $k \geq 1$. Then $m_\lambda(G) \leq \beta(G)+c(G)$, and the equality holds if and only if $G\cong C_3$ or $G\cong K_{1,t}$.
\end{lem}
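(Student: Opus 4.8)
The plan is to read off the inequality directly from Theorem~\ref{thm-upper-bound} and then to squeeze the equality case out of the classification already proved there. Since every induced matching is in particular a matching, we have $\beta'(G)\le\beta(G)$, and hence $m_\lambda(G)\le\beta'(G)+c(G)\le\beta(G)+c(G)$, the first inequality being Theorem~\ref{thm-upper-bound}. Consequently $m_\lambda(G)=\beta(G)+c(G)$ is possible only when \emph{both} of these inequalities are tight, i.e. when $m_\lambda(G)=\beta'(G)+c(G)$ \emph{and} $\beta'(G)=\beta(G)$ simultaneously. The first of these two conditions, by the equality part of Theorem~\ref{thm-upper-bound}, already forces $G$ into the short list $\{C_3(a,a,a):a\ge 0\}\cup\{C_5\}\cup\{\text{trees of diameter }\le 3\}$, so the whole problem reduces to deciding, within this list, which graphs also satisfy $\beta'(G)=\beta(G)$.

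The next step is a finite case check over that list. For $G=C_3(a,a,a)$ the three triangle vertices are pairwise adjacent, so an induced matching can use at most one of them, whence $\beta'(C_3(a,a,a))=1$; on the other hand $\beta(C_3(a,a,a))=1$ when $a=0$ and $\beta(C_3(a,a,a))=3$ when $a\ge 1$ (match each triangle vertex to one of its pendants), so here equality of the two matching numbers holds exactly for $C_3$. For $C_5$ one has $\beta'(C_5)=1<2=\beta(C_5)$. A tree of diameter $1$ or $2$ is a star $K_{1,t}$ with $\beta'(K_{1,t})=\beta(K_{1,t})=1$, while a tree of diameter $3$ is a double star, whose two adjacent centre vertices again collapse $\beta'$ to $1$ although $\beta=2$. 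Hence the only candidates are $G\cong C_3$ and $G\cong K_{1,t}$.

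Finally I would verify sufficiency by exhibiting the relevant eigenvalues: $\sigma(C_3)=\{2,(-1)^2\}$ gives $m_{-1}(C_3)=2=\beta(C_3)+c(C_3)$, and $\sigma(K_{1,t})=\{\sqrt{t},0^{t-1},-\sqrt{t}\}$ gives $m_{\pm\sqrt{t}}(K_{1,t})=1=\beta(K_{1,t})+c(K_{1,t})$. Overall the argument is a two-line reduction followed by a bounded case analysis, so I do not expect any genuine obstacle; the only point needing a little care is the comparison of $\beta$ with $\beta'$ on $C_3(a,a,a)$ and on double stars, where the presence of mutually adjacent vertices of large degree makes $\beta'$ drop to $1$ even though $\beta$ can be larger, and this is precisely what eliminates all members of the list except $C_3$ and the stars.
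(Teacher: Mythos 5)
Your proposal is correct and takes essentially the same approach as the paper: the bound follows from Theorem~\ref{thm-upper-bound} together with $\beta'(G)\le\beta(G)$, and the equality case is obtained by intersecting the extremal list of Theorem~\ref{thm-upper-bound} with the condition $\beta'(G)=\beta(G)$. The only cosmetic difference is that you verify $\beta'=\beta$ on that list by a direct finite check, whereas the paper delegates this to Lemma~3.6 of \cite{Wong}.
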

\begin{proof}
By Theorem \ref{thm-upper-bound}, we have $m_\lambda(G)\leq\beta'(G)+c(G)$. Since $\beta'(G)\leq\beta(G)$,  it follows that $m_\lambda(G)\leq\beta(G)+c(G)$.  Furthermore, equality holds if and only if $\beta'(G)=\beta(G)$.  Referring to Lemma $3.6$ in \cite{Wong} and Theorem \ref{thm-upper-bound},  we can easily observe that the equality holds if and only if $G\cong C_3$ or $G\cong K_{1,t}$.
\end{proof}
\begin{thm}
 Let $G$ be a graph with $\lambda$ as a nonzero eigenvalue. If $\beta'(G)\geq3$ , then $m_\lambda(G) \leq \beta(G)+c(G)-1$, with equality if and only if there is a positive integer $t$ such that $\lambda^2=t$, and there is a vertex $w$ of $G$ such that $G-w$ is the union of $s+1 \geq 3$ copies of $K_{1,t}$ and $H_i+w=K_{1,t+1}$ for each component $H_i$ of $G-w$.
\end{thm}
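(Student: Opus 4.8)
The plan is to bootstrap the two inequalities $m_\lambda(G)\le\beta'(G)+c(G)$ (Theorem~\ref{thm-upper-bound}) and $\beta'(G)\le\beta(G)$, and then to read off the extremal graphs by combining the equality analysis of Theorems~\ref{thm-upper-bound} and~\ref{extremal-graph} with the rigidity forced by $\beta'(G)=\beta(G)$. For the inequality itself: equality in Theorem~\ref{thm-upper-bound} forces $G\cong C_3(a,a,a)$, $G\cong C_5$, or $G$ a tree of diameter at most $3$, and each of these has $\beta'(G)\le1$; hence the hypothesis $\beta'(G)\ge3$ already yields $m_\lambda(G)\le\beta'(G)+c(G)-1$, and then $\beta'(G)\le\beta(G)$ gives $m_\lambda(G)\le\beta(G)+c(G)-1$.

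For the necessity of the structure, suppose $m_\lambda(G)=\beta(G)+c(G)-1$. From $m_\lambda(G)\le\beta'(G)+c(G)-1\le\beta(G)+c(G)-1$ both inequalities are tight, so $m_\lambda(G)=\beta'(G)+c(G)-1$ and $q:=\beta'(G)=\beta(G)\ge3$. I would then invoke Theorem~\ref{extremal-graph}: there is a vertex $w$ such that the components of $G-w$ are $H_1,\dots,H_q$ together with an edgeless part $I$, where $c(G)$ of the $H_i$ are copies of $C_3(a,a,a)$ with $m_\lambda(H_i)=2$ and the remaining $H_i$ are trees of diameter at most $3$ with $m_\lambda(H_i)=1$ (plus the diameter side-condition). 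Now I would use $\beta(G)=q$: since $\beta(G)\ge\sum_i\beta(H_i)\ge\sum_i\beta'(H_i)=q$ (each $\beta'(H_i)=1$ and there are $q$ of them), equality forces $\beta(H_i)=1$ for every $i$; this excludes double-star components (which have $\beta=2$) and excludes $C_3(a,a,a)$ with $a\ge1$ (which has $\beta=3$). If some $H_i\cong C_3$, then $G$ still carries a matching of size $q+1$ — take a triangle edge of $H_i$ missing the neighbour of $w$, the edge from $w$ into $H_i$, and one edge from each of the other $q-1$ components — contradicting $\beta(G)=q$; hence $c(G)=0$ and $G$ is a tree. The same ``adjoin an edge at $w$'' idea forces $I=\emptyset$ and forces $H_i+w\cong K_{1,t_i+1}$ for each star $H_i=K_{1,t_i}$ (otherwise a leaf of some $K_{1,t_i}$ with $t_i\ge2$, or a vertex of $I$, remains unmatched and can be paired with $w$, raising $\beta$). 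Finally, the nonzero eigenvalues of $K_{1,t_i}$ are $\pm\sqrt{t_i}$, so $m_\lambda(H_i)=1$ forces $\lambda^2=t_i$; thus all $t_i$ equal a common positive integer $t=\lambda^2$, and $G$ is exactly the claimed graph.

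For sufficiency I would verify the claimed $G$ directly. There $c(G)=0$, and a matching using one centre--leaf edge from each of the $q\ge3$ stars is maximum, so $\beta(G)=\beta'(G)=q$. Writing $w\sim c_1,\dots,c_q$ and solving $A(G)x=\lambda x$: the leaf equations give $x_\ell=x_{c_i}/\lambda$ (all leaves of $c_i$ agree), and the centre equations give $(\lambda-t/\lambda)\,x_{c_i}=x_w$; since $\lambda^2=t$ this forces $x_w=0$, and the equation at $w$ then reads $\sum_i x_{c_i}=0$ while the $x_{c_i}$ are otherwise free. Hence $\dim\mathcal{E}(\lambda)=q-1$, i.e.\ $m_\lambda(G)=q-1=\beta(G)+c(G)-1$.

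I expect the delicate point to be the first move of the necessity argument: showing that $\beta'(G)=\beta(G)$ together with $m_\lambda(G)=\beta'(G)+c(G)-1$ actually places $G$ in the scope of Theorem~\ref{extremal-graph}, i.e.\ that $\operatorname{diam}(G)\ge4$. A connected graph with $\beta'(G)\ge3$ need not have diameter at least $4$ (adjoining a vertex adjacent to all of $3K_2$ gives $\beta'=3$, $c=3$, diameter $2$), so the eigenvalue equality is essential here; I would establish it by running the $M^*$-unsaturated-vertex reduction from the proof of Theorem~\ref{thm-upper-bound} once more. The remaining reductions — eliminating $C_3(a,a,a)$-pieces, isolated vertices, double stars, and off-centre attachments of $w$ — are each a short matching construction, the only care being that the edges built from different components must be kept pairwise disjoint simultaneously.
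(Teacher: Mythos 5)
Your proposal follows the same top-level route as the paper --- chain $m_\lambda(G)\le\beta'(G)+c(G)-1$ with $\beta'(G)\le\beta(G)$ and then intersect the equality cases --- but you actually carry out the steps the paper only cites. The paper's proof is four lines: it invokes Corollary \ref{m=b+c-1} for the inequality and then says ``by Theorems \ref{extremal-graph} and 3.7 of \cite{Wong} the result follows.'' Two things you do are genuinely better. First, you derive the inequality from the equality characterization of Theorem \ref{thm-upper-bound} (all equality graphs have $\beta'\le1$), which sidesteps the $\operatorname{diam}(G)\ge4$ hypothesis of Corollary \ref{m=b+c-1} that the paper never verifies. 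Second, instead of citing Wong's tree theorem, you extract the extremal structure directly from Theorem \ref{extremal-graph} by exploiting the rigidity of $\beta(G)=\beta'(G)$: the component-wise count $\beta(G)\ge\sum_i\beta(H_i)$ kills $C_3(a,a,a)$ with $a\ge1$ and double stars, and your explicit size-$(q+1)$ matchings kill $C_3$ components, nonempty $I$, and leaf-attachments of $w$; your eigenvector computation for sufficiency is also correct. The one genuinely unfinished step --- which you correctly flag --- is showing that an equality graph falls within the scope of Theorem \ref{extremal-graph}, i.e.\ that $\operatorname{diam}(G)\ge4$; as your $3K_2$-plus-dominating-vertex example shows, this does not follow from $\beta'(G)\ge3$ alone and must be extracted from the eigenvalue condition. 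You sketch how you would do this but do not execute it; note, however, that the paper's own proof silently assumes the same diameter hypothesis when citing Corollary \ref{m=b+c-1} and Theorem \ref{extremal-graph}, so this lacuna is inherited from, not worse than, the original.
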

\begin{proof}
By Corollary \ref{m=b+c-1}, we have $m_\lambda(G)\leq\beta'(G)+c(G)-1$. Since $\beta'(G)\leq\beta(G)$, it follows that $m_\lambda(G)\leq\beta(G)+c(G)-1$.  Moreover, equality holds if and only if $\beta'(G)=\beta(G)$. According to Theorems \ref{extremal-graph} and $3.7$ in \cite{Wong}, the result follows.
\end{proof}

\end{document}